\documentclass[reqno,A4paper]{amsart}

\usepackage{amsmath}
\usepackage{amssymb}
\usepackage{amsthm}
\usepackage{enumerate}
\usepackage{mathrsfs} 
\usepackage{eqlist}
\usepackage{array}

\setlength{\textwidth}{150mm}
\setlength{\textheight}{206mm}
\setlength{\oddsidemargin}{5mm}
\setlength{\evensidemargin}{5mm}


\theoremstyle{plain}
\newtheorem{theorem}{Theorem}[section]

\newtheorem{lemma}{Lemma}[section]
\newtheorem{corol}{Corollary}[theorem]



\theoremstyle{definition}

\newtheorem{remark}{\textup{Remark}} 
\newtheorem{example}{\textit{Example}} 





\numberwithin{equation}{section}



\begin{document}
	
	\title[$*$-Ricci solitons and gradient almost $*$-Ricci solitons]%
	{$*$-Ricci solitons and gradient almost $*$-Ricci solitons on Kenmotsu manifolds}
	\author[Venkatesha \and Devaraja Mallesha Naik \and
	H Aruna Kumara]%
	{Venkatesha* \and Devaraja Mallesha Naik** \and
		H Aruna Kumara***}
	
	\newcommand{\acr}{\newline\indent}
	
	\address{\llap{*\,}Department of Mathematics\acr
		Kuvempu University\acr
		Shivamogga, Karnataka\acr
		 INDIA}
	\email{vensmath@gmail,com}
	
	\address{\llap{**\,}Department of Mathematics\acr
		Kuvempu University\acr
		Shivamogga, Karnataka\acr
		INDIA}
	\email{devarajamaths@gmail.com}
	
	\address{\llap{**\,}Department of Mathematics\acr
		Kuvempu University\acr
		Shivamogga, Karnataka\acr
		INDIA}
	\email{arunmathsku@gmail.com}

	
	\thanks{The second author (D.M.N.) is grateful to University Grants Commission, New Delhi (Ref. No.:20/12/2015(ii)EU-V) for financial support in the form of Junior Research Fellowship.}
	
	\subjclass[2010]{MSC 53C25, MSC 53C44, MSC 53D10, MSC 53D15} 
	\keywords{Kenmotsu manifold, $*$-Ricci soliton, gradient almost $*$-Ricci soliton, $\eta$-Einstein manifold}
	
	\begin{abstract}
	In this paper, we consider $*$-Ricci soliton in the frame-work of  Kenmotsu manifolds. First, we prove that if the metric of a Kenmotsu manifold $M$ is a $*$-Ricci soliton, then soliton constant $\lambda$ is zero. For 3-dimensional case, if $M$ admits a $*$-Ricci soliton, then we show that $M$ is of constant sectional curvature $-1$. Next, we show that if $M$ admits a $*$-Ricci soliton whose potential vector
	field is collinear with the characteristic vector field $\xi$, then $M$ is Einstein and soliton vector field is equal to $\xi$. Finally, we prove that if $g$ is a gradient almost $*$-Ricci soliton, then either $M$ is Einstein or the potential vector field is collinear with the characteristic vector field on an open set  of $M$. We verify our result by constructing examples for both $*$-Ricci soliton and gradient almost $*$-Ricci soliton.
	\end{abstract}
	
	\maketitle
	
\section{Introduction}
A \textit{Ricci soliton} on a Riemannian manifold $(M,g)$ is defined by 
\begin{equation}\label{E:00}
\pounds_V g+2 S+2\lambda g=0,
\end{equation}
where $\pounds$ denotes the Lie derivative operator, $\lambda$ is a constant and $S$ is the Ricci tensor of the metric $g$. Ricci soliton is a natural generalization of the Einstein metric (that is, $S=ag$, for some constant $a$), and  is a special self similar solution of the Hamilton's Ricci flow (see \cite{Ham}) $\frac{\partial}{\partial t}g(t)=-2S(t)$ with initial condition $g(0)=g$. We say that the Ricci soliton is \textit{shrinking} when $\lambda<0$, \textit{steady} when $\lambda=0$, and \textit{expanding} when $\lambda >0$. If the vector field $V$ is the gradient of a smooth function $f$ (denoted by $Df$, where $D$ indicates the gradient operator), then $g$ is called a \textit{gradient Ricci soliton} and in such a case \eqref{E:00} becomes
\begin{align}\label{Eq:01}
\text{Hess } f+S+\lambda g=0,
\end{align}
where $\text{Hess } f$ is the Hessian of the smooth function $f$. Equations \eqref{E:00} and \eqref{Eq:01} are respectively called \textit{almost Ricci soliton} and \textit{gradient almost Ricci soliton}, if $\lambda$ is a variable smooth function on $M$. We recommend the reference \cite{Chow} for more details about the Ricci flow and Ricci soliton. In this connection, we mention that within the framework of contact geometry Ricci solitons were first considered by Sharma in \cite{Sharma}.

The notion of $*$-Ricci tensor was first introduced by Tachibana \cite{Tachibana} on almost Hermitian manifolds and Hamada \cite{Hamada} apply this notion of $*$-Ricci tensor to almost contact manifolds defined by
\begin{equation*}
S^*(X,Y)=\frac{1}{2}trace(Z\to R(X, \varphi Y)\varphi Z),
\end{equation*}
for any  $X,Y\in \mathfrak{X}(M)$ (where $\mathfrak{X}(M)$ is the Lie algebra of all vector fields on $M$). In 2014, Kaimakamis and Panagiotidou \cite{Kaimakamis} introduced the concept of $*$-Ricci solitons within the framework of real hypersurfaces of a complex space form, where they essentially modified the definition of Ricci soliton by replacing the Ricci tensor $S$ in \eqref{E:00} with the $*$-Ricci tensor $S^*$. More precisely, a Riemannian metric $g$ on a manifold $M$ is called a $*$-Ricci soliton if there exists a constant $\lambda$ and a vector field $V$ such that
\begin{align}\label{Eq:03}
\pounds_V g+2S^*+2\lambda g=0.
\end{align}
Moreover, if the vector field $V$ is a gradient of a smooth function $f$, then we say that it is gradient $*$-Ricci soliton and \eqref{Eq:03} becomes
\begin{align}\label{Eq:04}
\text{Hess } f+S^*+\lambda g=0.
\end{align}
Note that  $*$-Ricci soliton is trivial if the vector field $V$ is Killing, and in this case the manifold becomes $*$-Einstein (that is, $S^*=\alpha g$, for some function $\alpha$).  If $\lambda$ appearing in \eqref{Eq:03} and \eqref{Eq:04} is a variable smooth function on $M$, then $g$ is called \textit{almost $*$-Ricci soliton} and \textit{gradient almost $*$-Ricci soliton} respectively.

Very recently in 2018, Ghosh and Patra \cite{Ghosh3} first undertook the study of $*$-Ricci solitons on almost contact metric manifolds. In their paper, the authors proved that if the metric of Sasakian manifold is a $*$-Ricci Soliton, then it is either positive Sasakian, or null-Sasakian. Furthermore, they also proved that if a complete Sasakian metric is a gradient almost $*$-Ricci Soliton, then it is positive-Sasakian and isometric to a unit sphere $S^{2n+1}$. Here we also mention the works of Prakasha and Veeresha \cite{Prakasha} within the frame-work of paracontact geometry. This motivates the present authors to consider Kenmotsu manifolds whose metric as a $*$-Ricci soliton and gradient almost $*$-Ricci soliton. 

The present paper is organized as follows: In Section~\ref{S:02}, we recall some fundamental definitions related to Kenmotsu manifolds. Section~\ref{S:03} is devoted to the study of Kenmotsu manifolds whose metric is a $*$-Ricci soliton. We prove that if a Kenmotsu manifold $M$ admits a $*$-Ricci soliton, then the soliton constant $\lambda=0$. Moreover, we show that if $M$ is an $\eta$-Einstein manifold of dimension $>3$ admitting $*$-Ricci soliton, then $M$ is Einstein.  For Kenmotsu 3-manifold admitting $*$-Ricci soliton, we prove that $M$ is of constant negative curvature $-1$. At the end of this section, we give an example of Kenmotsu manifold which admits a $*$-Ricci soliton, and verify our results. The final section deals with Kenmotsu manifold $M$ admitting gradient almost $*$-Ricci soliton, and we show that in such a case either $M$ is Einstein or the soliton vector field is collinear with characteristic vector field on an open set of $M$. The section ends with an example of a gradient almost $*$-Ricci soliton on a  Kenmotsu 3-manifold.

\section{Preliminaries}\label{S:02}
A $(2n+1)$-dimensional smooth manifold $M$ is said to
have an \textit{almost contact structure} if it admits a tensor field $\varphi$ of type $(1, 1)$, a vector field $\xi$ (called the \textit{characteristic vector field} or \textit{Reeb vector field}), and a 1-form
$\eta$ such that
\begin{equation} \label{E:02}
\varphi^2=-I+\eta\otimes\xi,\quad \eta (\xi)=1.
\end{equation}
An immediate consequence of \eqref{E:02} is that $\varphi\xi=0$ and $\eta\circ\varphi=0$. It is well known that a $(2n+1)$ dimensional  smooth manifold $M$ admits an almost contact structure if and only if the structure group of the tangent bundle of $M$ reduces to $U(n)\times 1$. For more details, we refer to \cite{Blair}.

If $M$ with $(\varphi, \xi, \eta)$-structure  admits a Riemannian metric $g$ such that
$g(\varphi X, \varphi Y)=g( X, Y)-\eta(X)\eta(Y),$
for all $X,Y\in \mathfrak{X}(M)$, then $(M, \varphi, \xi, \eta, g)$ is called an \textit{almost contact metric manifold}. We consider the sign convension of the Riemannian curvature tensor as $R(X, Y)=[\nabla_X, \nabla_Y]-\nabla_{[X, Y]}$.

An almost contact metric manifold $(M, \varphi, \xi, \eta, g)$ is said to be  \textit{Kenmotsu} (see \cite{Kenm}) if 
\begin{equation}\label{EQ:06}
(\nabla_X \varphi)Y=g(\varphi X, Y)\xi -\eta(Y)\varphi X,
\end{equation} 
for any $X,Y\in \mathfrak{X}(M)$. For a Kenmotsu manifold, we also have (see \cite{Kenm})
\begin{gather}
\nabla_X \xi =X-\eta(X)\xi,\label{E:04}\\
R(X, Y)\xi=\eta(X)Y-\eta(Y)X,\label{E:05}\\
S(X,\xi)=-2n\eta(X),\label{E:06}\\
(\pounds_\xi g)(X,Y)=2\{g(X,Y)-\eta(X)\eta(Y)\},\label{E:007}
\end{gather}
for all $X,Y\in \mathfrak{X}(M)$.
Note that \eqref{E:007} implies that $\xi$ is not Killing in Kenmotsu manifold. We say $M$ is \textit{$\eta$-Einstein} if the Ricci tensor satisfy
\begin{equation}\label{E:08}
S=\alpha g+\beta\eta\otimes\eta,
\end{equation}
for certain smooth function $\alpha$ and $\beta$. If $\beta=0$, then $M$ becomes an \textit{Einstein manifold}. From \eqref{E:08} and \eqref{E:06}, we have
\begin{equation}\label{E:4.2}
\alpha+\beta=-2n.
\end{equation}
Contracting \eqref{E:08} and using \eqref{E:4.2}, we get
\begin{equation*}
\alpha=\left( \frac{r}{2n}+1\right) , \quad \beta=-\left( \frac{r}{2n}+2n+1\right).
\end{equation*}
Thus, a Kenmotsu manifold $M$ is $\eta$-Einstein if and only if 
\begin{equation}\label{E:010}
S=\left( \frac{r}{2n}+1\right)g-\left( \frac{ r}{2n}+2n+1\right)\eta\otimes\eta.
\end{equation}

\section{*-Ricci soliton on Kenmotsu manifolds}\label{S:03}
First we need the following lemmas.
\begin{lemma}
	A $(2n+1)$ dimensional Kenmotsu manifold satisfies 
	\begin{gather}
	(\nabla_X Q)\xi=-QX-2nX,\label{Eq:11}\\
	(\nabla_\xi Q)X=-2QX-4nX,\label{Eq:12}
	\end{gather} 
	where $Q$ is the Ricci operator defined by $S(X,Y)=g(QX,Y)$.
\end{lemma}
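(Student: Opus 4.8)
The plan is to derive both identities directly from the defining equation \eqref{EQ:06} of a Kenmotsu manifold together with the structural formulas \eqref{E:04}--\eqref{E:06}. The starting point for \eqref{Eq:11} is to differentiate the relation $Q\xi = -2n\xi$, which is the operator form of \eqref{E:06}. Applying $\nabla_X$ to $Q\xi = -2n\xi$ and using the Leibniz rule gives $(\nabla_X Q)\xi + Q(\nabla_X\xi) = -2n\nabla_X\xi$. Substituting $\nabla_X\xi = X - \eta(X)\xi$ from \eqref{E:04} yields
\[
(\nabla_X Q)\xi = -2n(X-\eta(X)\xi) - Q(X-\eta(X)\xi) = -2nX - QX + \eta(X)(2n\xi + Q\xi),
\]
and since $Q\xi = -2n\xi$ the last bracket vanishes, giving $(\nabla_X Q)\xi = -QX - 2nX$, which is \eqref{Eq:11}.

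For \eqref{Eq:12} I would use the standard fact that the second contracted Bianchi identity (or a direct computation via the divergence of the curvature) relates $(\nabla_\xi Q)X$ to $(\nabla_X Q)\xi$ through the curvature term $R(\xi,X)$ acting appropriately; more concretely, the plan is to compute $(\nabla_X Q)Y$ paired against $\xi$ and exploit the symmetry $g((\nabla_X Q)Y,\xi) = g((\nabla_X Q)\xi, Y)$, which holds because $Q$ is self-adjoint. Alternatively, and perhaps more cleanly, I would contract the known expression for the curvature tensor $R(X,Y)\xi$ in \eqref{E:05}: differentiating \eqref{E:05} covariantly and contracting, or using the formula $(\nabla_\xi S)(X,Y) = -(\text{divergence-type terms})$, one expresses $(\nabla_\xi Q)X$ in terms of $QX$ and $X$. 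The cleanest route is probably to use the Kenmotsu identity for the Lie derivative of $S$ along $\xi$ combined with \eqref{E:007}: since $\pounds_\xi g = 2(g - \eta\otimes\eta)$, one gets $(\pounds_\xi S)(X,Y)$ in terms of $S$, and comparing $\pounds_\xi S$ with $\nabla_\xi S$ plus correction terms involving $\nabla\xi$ isolates $(\nabla_\xi Q)X$. Carrying out that comparison, using $\nabla_X\xi = X-\eta(X)\xi$, gives $(\nabla_\xi Q)X = -2QX - 4nX$.

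The main obstacle I anticipate is bookkeeping in the second identity: making sure all the $\eta(X)\xi$-type correction terms from $\nabla\xi$ are tracked correctly and that the sign conventions for $R$ and for the soliton equations are consistent throughout. The first identity \eqref{Eq:11} is essentially immediate once one writes $Q\xi=-2n\xi$ and differentiates, so I expect no real difficulty there; the work is concentrated in \eqref{Eq:12}, where one must correctly relate the covariant derivative $\nabla_\xi Q$ to the more easily computed $\nabla_X Q|_{\xi}$ via either a Bianchi-type identity or a Lie-derivative comparison. A useful sanity check at the end is to test both formulas on the model Kenmotsu space (a warped product $\mathbb{R}\times_{f}N$ with $f=e^t$ and $N$ Kähler–Einstein), where $Q$ is explicitly known, to confirm the coefficients $-2n$ and $-4n$.
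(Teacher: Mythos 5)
Your treatment of \eqref{Eq:11} is correct and is exactly the paper's argument: write \eqref{E:06} as $Q\xi=-2n\xi$, differentiate, and use \eqref{E:04}; the $\eta(X)$-terms cancel because $Q\xi=-2n\xi$. Nothing further is needed there.

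For \eqref{Eq:12}, however, your proposal does not contain a workable proof. The route you commit to as ``cleanest'' --- reading off $\pounds_\xi S$ from $\pounds_\xi g=2(g-\eta\otimes\eta)$ and then comparing with $\nabla_\xi S$ --- has a genuine gap: the Lie derivative of the Ricci tensor is \emph{not} an algebraic consequence of the Lie derivative of the metric (it is governed by the linearized Ricci operator, i.e.\ it involves two derivatives of $\pounds_\xi g$), so \eqref{E:007} by itself gives you no formula for $\pounds_\xi S$. Worse, the comparison identity $(\pounds_\xi S)(X,Y)=(\nabla_\xi S)(X,Y)+S(\nabla_X\xi,Y)+S(X,\nabla_Y\xi)=(\nabla_\xi S)(X,Y)+2S(X,Y)+4n\eta(X)\eta(Y)$ shows that the input you would need, namely $\pounds_\xi S=-4n(g-\eta\otimes\eta)$, is equivalent to \eqref{Eq:12} itself, so assuming it is circular. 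The symmetry $g((\nabla_X Q)Y,\xi)=g((\nabla_X Q)\xi,Y)$ you invoke is true (each $\nabla_X Q$ is self-adjoint) but it only exchanges the arguments of the \emph{same} derivative $\nabla_X Q$; it cannot trade $\nabla_\xi Q$ for $\nabla_X Q$ and hence cannot produce \eqref{Eq:12}. The alternative you mention only in passing --- covariantly differentiating \eqref{E:05} --- is the right start and is what the paper does, but the essential step missing from your sketch is how to turn the resulting contraction $\sum_i g((\nabla_{e_i}R)(e_i,Y)\xi,Z)=S(Y,Z)+2n\,g(Y,Z)$ into a statement about $\nabla_\xi Q$: the paper applies the second Bianchi identity, contracted once, to get $\sum_i g((\nabla_{e_i}R)(Z,\xi)Y,e_i)=g((\nabla_Z Q)\xi,Y)-g((\nabla_\xi Q)Z,Y)$, and then eliminates $(\nabla_Z Q)\xi$ via \eqref{Eq:11} to obtain $(\nabla_\xi Q)Z=-2QZ-4nZ$. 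Without that Bianchi step (or an equivalent substitute, e.g.\ first computing $\pounds_\xi\nabla$ from $\nabla\xi=I-\eta\otimes\xi$ and then $\pounds_\xi R$ and $\pounds_\xi S$), your outline does not reach the coefficients $-2$ and $-4n$.
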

\begin{proof}
	Note that \eqref{E:06} implies $Q\xi=-2n\xi$. Differentiation this, and recalling \eqref{E:04} provides \eqref{Eq:11}. 
	
	Now differentiating \eqref{E:05} along $W$ leads to
	\begin{equation*}
	(\nabla_W R)(X,Y)\xi=-R(X,Y)W+g(X,W)Y-g(Y,W)X.
	\end{equation*}
	Let $\{e_i \}_{i=1}^{2n+1}$ be a local orthonormal basis on $M$. Taking inner product of the above equation with $Z$ and then plugging $X=Y=e_i$ and summing over $i$ shows that
	\begin{equation}\label{Eq:13}
	g((\nabla_{e_i} R)(e_i,Y)\xi, Z)=S(Y,Z)+2ng(Y,Z).
	\end{equation}
	From second Bianchi identity, one can easily obtain	
	\begin{equation}\label{Eq:14}
	g((\nabla_{e_i}R)(Z,\xi)Y,e_i)=g((\nabla_Z Q)\xi,Y)-g((\nabla_\xi Q)Z,Y).
	\end{equation}
	Fetching \eqref{Eq:14} in \eqref{Eq:13} and using \eqref{Eq:11}, we obtain
	\begin{equation*}
	g((\nabla_\xi Q)Z,Y)=-2S(Y,Z)-4ng(Y,Z),
	\end{equation*}
	which proves \eqref{Eq:12}.
\end{proof}
Now we derive the expression of $*$-Ricci tensor on Kenmotsu manifolds.
\begin{lemma}
	On a Kenmotsu manifold, the $*$-Ricci tensor is given by
	\begin{equation}\label{Eq:15}
	S^*=S+(2n-1)g+\eta\otimes\eta.
	\end{equation}
\end{lemma}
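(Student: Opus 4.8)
The plan is to compute $S^*(X,Y)$ directly from its definition as half the trace of $Z\mapsto R(X,\varphi Y)\varphi Z$, and to reduce everything to the ordinary Ricci tensor by exploiting the curvature identities available on a Kenmotsu manifold. Fix a local orthonormal frame $\{e_i\}_{i=1}^{2n+1}$, so that
\begin{equation*}
S^*(X,Y)=\frac{1}{2}\sum_{i=1}^{2n+1} g\big(R(X,\varphi Y)\varphi e_i, e_i\big).
\end{equation*}
The first move is to rewrite $R(X,\varphi Y)\varphi e_i$ using the first Bianchi identity so that the $\varphi e_i$ sits in a slot where the sum over $i$ is easy to carry out; concretely I would use the symmetry $g(R(X,\varphi Y)\varphi e_i,e_i)=g(R(\varphi e_i,e_i)X,\varphi Y)$ and then apply the Bianchi identity $R(\varphi e_i,e_i)X = -R(e_i,X)\varphi e_i - R(X,\varphi e_i)e_i$ to break the expression into pieces each of which, after summing over $i$, collapses to a contraction of $R$.

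The key computational input is the behaviour of $\varphi$ under the Kenmotsu connection, namely \eqref{EQ:06}, together with the curvature formula one obtains by differentiating it. From $(\nabla_X\varphi)Y=g(\varphi X,Y)\xi-\eta(Y)\varphi X$ one derives, by a standard second-covariant-derivative computation, an expression for $R(X,Y)\varphi Z-\varphi R(X,Y)Z$ purely in terms of $g$, $\eta$, $\xi$ and $\varphi$; this is the Kenmotsu analogue of the Sasakian curvature identity and is where equations \eqref{E:04}, \eqref{E:05}, \eqref{E:06} all enter. Substituting that identity lets me replace the $\varphi$-twisted curvature terms in $S^*$ by $\varphi$ acting on ordinary curvature terms plus lower-order tensorial corrections. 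Summing the $\varphi$-conjugated Ricci contraction over the frame and using $\sum_i g(\varphi e_i,\varphi e_i)=2n$, $\eta\circ\varphi=0$, $\varphi\xi=0$, together with $S(X,\xi)=-2n\eta(X)$, the $\varphi$'s cancel in pairs and one is left with $S(X,Y)$ plus a combination of $g(X,Y)$ and $\eta(X)\eta(Y)$.

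Finally I would pin down the two numerical coefficients. One clean way is to evaluate both sides on special arguments: taking $X=Y=\xi$ forces the coefficient of $\eta\otimes\eta$ once the coefficient of $g$ is known, since $S^*(\xi,\xi)=\tfrac12\operatorname{trace}(Z\mapsto R(\xi,\varphi\xi)\varphi Z)=0$ because $\varphi\xi=0$; and testing on a $\varphi$-invariant vector, or simply tracking constants through the frame sum, fixes the coefficient $2n-1$ of $g$. This should yield exactly \eqref{Eq:15}, $S^*=S+(2n-1)g+\eta\otimes\eta$. The main obstacle I anticipate is bookkeeping in the middle step: correctly deriving the Kenmotsu curvature-versus-$\varphi$ commutation identity from \eqref{EQ:06} and then not dropping any of the $g(\varphi X,\varphi Y)$-type terms when the contraction is performed; the algebra is routine but error-prone, and a sanity check against the dimension-three case (where \eqref{Eq:15} together with $*$-Ricci soliton should force constant curvature $-1$) is worth doing.
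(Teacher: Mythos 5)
Your proposal follows essentially the same route as the paper's proof: expand the trace over an orthonormal frame, rearrange with the first Bianchi identity, invoke the Kenmotsu commutation identities between $R$ and $\varphi$ (the paper's \eqref{E:3.6}--\eqref{E:3.7}, which it quotes from Kenmotsu rather than re-deriving from \eqref{EQ:06}), and contract using the $\varphi$-adapted frame $\{\varphi e_i,\xi\}$ together with $\varphi\xi=0$ and $S(X,\xi)=-2n\eta(X)$. The outline is correct; the only difference is cosmetic --- you pin down the coefficients by testing on $\xi$ and on $\varphi$-invariant directions, while the paper tracks them directly through the frame contraction.
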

\begin{proof}
	From the definition of *-Ricci tensor we also have
	\begin{equation*}
	S^*(X,Y)=\frac{1}{2}trace(Z\to \varphi R(X, \varphi Y)Z),
	\end{equation*}
	which by virtue of first Bianchi identity gives
	\begin{equation}\label{EQ:20}
	S^*(X,Y)=\frac{1}{2}trace\{Z\to -\varphi R(\varphi Y,Z)X\}+\frac{1}{2}trace\{Z\to -\varphi R(Z,X)\varphi Y\}.
	\end{equation}
	We now recall the following identities (see \cite{Kenm}):
	\begin{align}
	R(X,Y)\varphi Z-\varphi R(X,Y)Z=& g(Y,Z)\varphi X-g(X, Z)\varphi Y\nonumber\\	&+g(X,\varphi Z)Y-g(Y, \varphi Z)X,\label{E:3.6}\\
	R(\varphi X, \varphi Y)Z=R(X,Y)Z&+g(Y, Z)X-g(X,Z)Y\nonumber\\
	&+g(Y, \varphi Z)\varphi X-g(X, \varphi Z)\varphi Y.\label{E:3.7}
	\end{align}
	for any $X,Y, Z\in \mathfrak{X}(M)$. Taking inner product of \eqref{E:3.6} with $\varphi W$, and then making use of skew-symmetry of $\varphi$ and \eqref{E:02}, we obtain
	\begin{align*}
	g(R(X,Y)\varphi Z, \varphi W)-g(R(X,Y)Z,W)=g(Y,Z)g(X,W)-g(X,Z)g(Y,W)\\
	+g(X,\varphi Z)g(Y, \varphi W)-g(Y,\varphi Z)g(X, \varphi W).
	\end{align*} 
	Let $\{e_i\}_{i=1}^{2n+1}$ with $e_{2n+1}=\xi$ be a local orthonormal basis on $M$. Then putting $Y=Z=e_i$ in the above equation and summing over $i$ gives
	\begin{equation}\label{EQ:23}
	trace\{Z\to -\varphi R(Z,X)\varphi W\}=S(X,W)+(2n-1)g(X,W)+\eta(X)\eta(W).
	\end{equation}
	On the other hand it follows from \eqref{E:3.7} that
	\begin{equation}\label{EQ:24}
	\sum_{i=1}^{2n} g(R(\varphi Y, \varphi e_i)e_i, W)=S(Y,W)+(2n-1)g(Y,W)+\eta(Y)\eta(W),
	\end{equation}
	where we used \eqref{E:05}. Note that, if $\{e_i\}_{i=1}^{2n+1}$ with $e_{2n+1}=\xi$ is an orthonormal basis of vector fields on $M$, then $\{\varphi e_i, \xi\}_{i=1}^{2n}$ is also a local orthonormal basis  on $M$. Thus, as $\varphi \xi=0$, it follows that
	\begin{align}\label{EQ:25}
	\frac{1}{2}trace\{Z\to -\varphi R(\varphi Y,Z)X\}&=\frac{1}{2} \sum_{i=1}^{2n} -g(\varphi R(\varphi Y, \varphi e_i) X, \varphi e_i)\nonumber\\
	&=\frac{1}{2} \sum_{i=1}^{2n} g(R(\varphi Y, \varphi e_i) e_i, X).
	\end{align}
	Now use of equations \eqref{EQ:25}, \eqref{EQ:24} and \eqref{EQ:23} in \eqref{EQ:20} gives \eqref{Eq:15}.
\end{proof}

Note that due to the presence of some extra terms in the expression of $*$-Ricci tensor, the defining condition of the $*$-Ricci soliton is different from Ricci soliton.
\begin{theorem}
	If the metric of a Kenmotsu manifold is a $*$-Ricci soliton, then the soliton constant $\lambda=0$.
\end{theorem}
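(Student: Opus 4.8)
The plan is to extract everything from the behaviour of the soliton equation along the Reeb field. First I would rewrite \eqref{Eq:03} using the expression \eqref{Eq:15} for $S^{*}$, so that $\pounds_{V}g+2S+2(2n-1)g+2\,\eta\otimes\eta+2\lambda g=0$. The crucial observation is that $S^{*}$ annihilates $\xi$: by \eqref{Eq:15} and \eqref{E:06}, $S^{*}(X,\xi)=-2n\eta(X)+(2n-1)\eta(X)+\eta(X)=0$. Putting $Y=\xi$ in \eqref{Eq:03} therefore gives $(\pounds_{V}g)(X,\xi)=-2\lambda\eta(X)$, whence $(\pounds_{V}g)(\xi,\xi)=-2\lambda$ and so $\xi(\rho)=-\lambda$, where I abbreviate $\rho:=\eta(V)$. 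Moreover a Kenmotsu manifold has $d\eta=0$ (since $(\nabla_{X}\eta)(Y)=g(X,Y)-\eta(X)\eta(Y)$ is symmetric, by \eqref{E:04}), so Cartan's formula yields $\pounds_{V}\eta=d\rho$; combining this with $(\pounds_{V}g)(\,\cdot\,,\xi)=-2\lambda\eta$ in the identity $(\pounds_{V}\eta)(Z)=(\pounds_{V}g)(Z,\xi)+g(Z,\pounds_{V}\xi)$ gives $\pounds_{V}\xi=D\rho+2\lambda\xi$.

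Next I would compute $\pounds_{V}\nabla$. Writing $h:=\pounds_{V}g$, covariantly differentiating $h(X,\xi)=-2\lambda\eta(X)$ and using \eqref{E:04} gives $(\nabla_{Y}h)(X,\xi)=2S^{*}(X,Y)$. Plugging this into the classical formula $2g\big((\pounds_{V}\nabla)(X,Y),Z\big)=(\nabla_{X}h)(Y,Z)+(\nabla_{Y}h)(X,Z)-(\nabla_{Z}h)(X,Y)$ with $Y=\xi$, the two outer terms cancel and one is left with $2g\big((\pounds_{V}\nabla)(X,\xi),Z\big)=(\nabla_{\xi}h)(X,Z)=-2(\nabla_{\xi}S)(X,Z)$, which by \eqref{Eq:12} equals $4S(X,Z)+8ng(X,Z)$. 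Hence $(\pounds_{V}\nabla)(X,\xi)=2QX+4nX$. Substituting this into the identity $(\pounds_{V}R)(X,Y)Z=\big(\nabla_{X}(\pounds_{V}\nabla)\big)(Y,Z)-\big(\nabla_{Y}(\pounds_{V}\nabla)\big)(X,Z)$, and handling the resulting $\nabla\xi$-terms with \eqref{E:04}, \eqref{Eq:11}, \eqref{Eq:12} and $Q\xi=-2n\xi$, gives
\[
(\pounds_{V}R)(X,Y)\xi=2(\nabla_{X}Q)Y-2(\nabla_{Y}Q)X+2\big(\eta(X)QY-\eta(Y)QX\big)+4n\big(\eta(X)Y-\eta(Y)X\big).
\]

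To finish I would compute $(\pounds_{V}R)(X,Y)\xi$ a second way, by Lie-differentiating \eqref{E:05}. Since $\pounds_{V}$ is a derivation on contractions, $(\pounds_{V}R)(X,Y)\xi+R(X,Y)(\pounds_{V}\xi)=X(\rho)Y-Y(\rho)X$, so with $\pounds_{V}\xi=D\rho+2\lambda\xi$ and \eqref{E:05} one gets $(\pounds_{V}R)(X,Y)\xi=X(\rho)Y-Y(\rho)X-R(X,Y)D\rho-2\lambda\big(\eta(X)Y-\eta(Y)X\big)$. Equating the two expressions and specializing to $Y=\xi$ is where the theorem drops out: the first expression collapses to $0$ after \eqref{Eq:11}, \eqref{Eq:12} and $Q\xi=-2n\xi$, while in the second the only surviving term is $R(X,\xi)D\rho$, which by \eqref{E:05} and the curvature symmetries equals $X(\rho)\xi-\xi(\rho)X$; since $\xi(\rho)=-\lambda$, the equation reduces to $2\lambda\big(X-\eta(X)\xi\big)=0$. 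As $\dim M=2n+1\ge 3$ there are vectors with $X-\eta(X)\xi\ne 0$, so $\lambda=0$.

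The only genuine difficulty is the bookkeeping: passing from $\pounds_{V}g$ to $\pounds_{V}\nabla$ to $\pounds_{V}R$ forces one to differentiate tensors that have $\xi$ inserted in a slot, so the terms $\nabla_{X}\xi=X-\eta(X)\xi$ proliferate and must be tracked carefully, and one has to be scrupulous about sign conventions in the three identities relating $\pounds_{V}g$, $\pounds_{V}\nabla$ and $\pounds_{V}R$. Once $(\pounds_{V}\nabla)(X,\xi)=2QX+4nX$ is established the rest is essentially forced; in fact one could bypass $\pounds_{V}R$ altogether, combining this with $(\pounds_{V}\nabla)(X,\xi)=R(V,X)\xi+\nabla_{X}\nabla_{\xi}V-\nabla_{\nabla_{X}\xi}V$ and the expression for $\nabla_{\xi}V$ dictated by $(\pounds_{V}g)(\,\cdot\,,\xi)=-2\lambda\eta$ to derive $\nabla_{X}D\rho=-2QX-(4n+2\lambda)X-X(\rho)\xi-\eta(X)D\rho$, and then comparing the resulting $R(X,\xi)D\rho$ with the value required by \eqref{E:05}.
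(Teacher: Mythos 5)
Your proof is correct and follows essentially the same route as the paper: substitute \eqref{Eq:15} into \eqref{Eq:03}, derive $(\pounds_V \nabla)(X,\xi)=2QX+4nX$, obtain the expression \eqref{Eq:22} for $(\pounds_V R)(X,Y)\xi$ (hence $(\pounds_V R)(X,\xi)\xi=0$), and compare with the Lie derivative of \eqref{E:05} to force $\lambda\left(X-\eta(X)\xi\right)=0$ and thus $\lambda=0$. The only cosmetic difference is that you make $\pounds_V\xi=D\rho+2\lambda\xi$ explicit via $d\eta=0$ and Cartan's formula, while the paper works directly with $\eta(\pounds_V\xi)=\lambda$; the substance of the argument is the same.
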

\begin{proof}
	Feeding the expression of $*$-Ricci tensor as given by \eqref{Eq:15} into the $*$-Ricci soliton equation \eqref{Eq:03}, it follows that
	\begin{equation}\label{E:10}
	(\pounds_V g)(X, Y)=-2S(X,Y)-2(2n-1+\lambda)g(X,Y)-2\eta(X)\eta(Y).
	\end{equation}
	Taking covariant derivative of \eqref{E:10} along an arbitrary vector field $Z$, we get
	\begin{align}\label{E:11}
	(\nabla_Z \pounds_V g)(X,Y)=&-2\{(\nabla_Z S)(X,Y)+\eta(X)g(Y,Z)\nonumber\\
	&+\eta(Y)g(X,Z)-2 \eta(X)\eta(Y)\eta(Z) \}.
	\end{align}	
	From Yano \cite{Yano}, we know the following well known commutation formula:
	\begin{align*}
	(\pounds_V \nabla_X g-\nabla_X \pounds_V g- \nabla_{[V,X]} g)(Y, Z)=\\
	-g((\pounds_V \nabla)(X, Y), Z)-g((\pounds_V \nabla)(X, Z), Y),
	\end{align*}
	for all $X,Y, Z\in \mathfrak{X}(M)$. Since $\nabla g=0$, the above equation gives
	\begin{equation}\label{E:12}
	(\nabla_X \pounds_V g)(Y, Z)=g((\pounds_V \nabla)(X, Y), Z)+g((\pounds_V \nabla)(X, Z), Y),
	\end{equation}
	for all $X,Y, Z\in \mathfrak{X}(M)$. As $\pounds_V \nabla$ is a symmetric, it follows from \eqref{E:12} that
	\begin{align}\label{E:13}
	&g((\pounds_V \nabla)(X, Y), Z)\nonumber\\
	&=\frac{1}{2}(\nabla_X \pounds_V g)(Y, Z)+\frac{1}{2}(\nabla_Y \pounds_V g)(Z, X)-\frac{1}{2}(\nabla_Z \pounds_V g)(X, Y).
	\end{align}
	Making use of \eqref{E:11} in \eqref{E:13} we have
	\begin{align*}\label{E:14}
	g((\pounds_V \nabla)(X,Y), Z)=&(\nabla_Z S)(X,Y)-(\nabla_X S)(Y,Z)-(\nabla_Y S)(Z,X)\\
	&-2\eta(Z)g(X,Y)+2\eta(X)\eta(Y)\eta(Z).
	\end{align*}
	Plugging $Y=\xi$ in the above equation and using \eqref{Eq:11} and \eqref{Eq:12}, we have
	\begin{equation*}
	(\pounds_V \nabla)(X,\xi)=2QX+4nX.
	\end{equation*}
	Differentiating the above equation along $Y$ and using \eqref{E:04}, we obtain
	\begin{equation*}
	(\nabla_Y \pounds_V \nabla)(X,\xi)=-(\pounds_V \nabla)(X,Y)+2\eta(Y)\{QX+2nX \}+2(\nabla_Y Q)X.
	\end{equation*}
	Feeding the above obtained expression into the following well known formula (see Yano \cite{Yano})
	\begin{equation}\label{E:026}
	(\pounds_V R)(X,Y)Z=(\nabla_X\pounds_V\nabla)(Y,Z)-(\nabla_Y\pounds_V\nabla)(X,Z),
	\end{equation}
	and using the symmetry of $\pounds_V \nabla$, we immediately obtain
	\begin{align}\label{Eq:22}
	(\pounds_V R)(X,Y)\xi=&2\eta(X)\{QY+2nY \}-2\eta(Y)\{QX+2n X \}\nonumber\\
	&+2\{(\nabla_X Q)Y-(\nabla_Y Q)X \}.
	\end{align}
	Substituting $Y=\xi$ in the above equation, we get
	\begin{equation}\label{Eq:23}
	(\pounds_V R)(X,\xi)\xi=0.
	\end{equation}
	Now taking the Lie-derivative of $R(X,\xi)\xi=-X+\eta(X)\xi$ along $V$ gives
	\begin{equation*}
	(\pounds_V R)(X,\xi)\xi-2n\eta(\pounds_V \xi)X+g(X,\pounds_V \xi)\xi=(\pounds_V \eta)(X)\xi,
	\end{equation*}
	which by virtue of \eqref{Eq:23} becomes
	\begin{equation}\label{Eq:24}
	(\pounds_V \eta)(X)\xi=-2n\eta(\pounds_V \xi)X+g(X,\pounds_V \xi)\xi.
	\end{equation}
	With the help of \eqref{E:06}, the equation \eqref{E:10} takes the form
	\begin{equation}\label{Eq:25}
	(\pounds_V g)(X,\xi)=-2\lambda\eta(X).
	\end{equation}
	Substituting $X=\xi$ in the above equation gives
	\begin{equation}\label{Eq:26}
	\eta(\pounds_V \xi)=\lambda.
	\end{equation}
	Now Lie-differentiating $\eta(X)=g(X,\xi)$ yields $(\pounds_V \eta)(X)=(\pounds_V g)(X,\xi)+g(X,\pounds_V \xi)$. Using this and \eqref{Eq:26} in \eqref{Eq:24} provides 
	$\lambda(nX-\eta(X)\xi)=0.$
	Tracing the previous  equation yield $\lambda=0$.
\end{proof}
\begin{lemma}
	If the metric of a Kenmotsu manifold is a $*$-Ricci soliton, then the Ricci tensor satisfy
	\begin{equation}\label{Eq:016}
	(\pounds_V S)(X,\xi)=-X(r)+\xi(r)\eta(X).
	\end{equation}
\end{lemma}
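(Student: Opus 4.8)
The plan is to derive \eqref{Eq:016} by contracting the identity \eqref{Eq:22} for $(\pounds_V R)(X,Y)\xi$ — established in the proof of the preceding theorem, independently of the value of $\lambda$ — over its first slot. Pick a local orthonormal frame $\{e_i\}_{i=1}^{2n+1}$ on $M$. Since the Ricci tensor is the trace of the Riemann endomorphism, $S(X,Y)=\mathrm{trace}\{W\mapsto R(W,X)Y\}$, and the Lie derivative commutes with contraction, $(\pounds_V S)(X,\xi)=\sum_{i} g\big((\pounds_V R)(e_i,X)\xi,\,e_i\big)$. Thus I would substitute \eqref{Eq:22} (with $e_i$ in the first slot and $X$ in the second), pair with $e_i$, and sum over $i$, which leaves three sums to evaluate.

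The first sum is $2\sum_i \eta(e_i)\,g(QX+2nX,e_i)=2\,g(QX+2nX,\xi)$ since $\sum_i\eta(e_i)e_i=\xi$; by \eqref{E:06} this equals $2\big(S(X,\xi)+2n\eta(X)\big)=0$. The second sum is $-2\eta(X)\sum_i g(Qe_i+2ne_i,e_i)=-2\eta(X)\big(r+2n(2n+1)\big)$, using $\mathrm{trace}\,Q=r$. For the third sum, $2\sum_i g\big((\nabla_{e_i}Q)X-(\nabla_X Q)e_i,\,e_i\big)$, I would use that each $\nabla_{e_i}Q$ is self-adjoint together with the contracted second Bianchi identity $\sum_i(\nabla_{e_i}Q)e_i=\tfrac12\,\mathrm{grad}\,r$ to get $\sum_i g\big((\nabla_{e_i}Q)X,e_i\big)=\sum_i g\big((\nabla_{e_i}Q)e_i,X\big)=\tfrac12 X(r)$, whereas $\sum_i g\big((\nabla_X Q)e_i,e_i\big)=\mathrm{trace}(\nabla_X Q)=X(r)$; hence the third sum equals $-X(r)$.

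It remains to recast the constant $r+2n(2n+1)$: taking the trace of \eqref{Eq:12} gives $\xi(r)=\mathrm{trace}(\nabla_\xi Q)=-2r-4n(2n+1)=-2\big(r+2n(2n+1)\big)$, so $-2\eta(X)\big(r+2n(2n+1)\big)=\xi(r)\eta(X)$. Adding the three contributions yields $(\pounds_V S)(X,\xi)=-X(r)+\xi(r)\eta(X)$, which is \eqref{Eq:016}. The only mildly delicate points are the index bookkeeping in the contraction — especially invoking self-adjointness of $\nabla_{e_i}Q$ and the contracted Bianchi identity to reduce the ``wrong-slot'' term $\sum_i g\big((\nabla_{e_i}Q)X,e_i\big)$ to $\tfrac12 X(r)$ — and spotting the auxiliary identity $\xi(r)=-2(r+2n(2n+1))$ hidden in \eqref{Eq:12}, without which the $\eta(X)$-terms would not collapse to $\xi(r)\eta(X)$.
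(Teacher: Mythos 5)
Your argument is correct and is essentially the paper's proof: contracting \eqref{Eq:22} over its first slot and invoking $\operatorname{div}Q=\tfrac12\,\mathrm{grad}\,r$ and $\mathrm{trace}(\nabla_X Q)=X(r)$ yields $(\pounds_V S)(X,\xi)=-X(r)-2\eta(X)\{r+2n(2n+1)\}$, which is exactly the paper's intermediate identity \eqref{Eq:27}. The only (harmless) deviation is in how you obtain $\xi(r)=-2\bigl(r+2n(2n+1)\bigr)$: the paper derives it from the soliton data by contracting \eqref{Eq:23} to get $(\pounds_V S)(\xi,\xi)=0$ and setting $Y=\xi$ in \eqref{Eq:27}, whereas you take the trace of the Kenmotsu identity \eqref{Eq:12}, which is equally valid and even shows that this relation holds independently of the soliton hypothesis.
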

\begin{proof}
	Contracting the equation \eqref{Eq:22} with respect to $X$ and recalling the following well-known formulas 
	\begin{equation*}
	\text{div}Q=\frac{1}{2}\text{grad }r, \qquad \text{trace}\nabla Q=\text{grad }r,
	\end{equation*}
	we easily obtain
	\begin{equation}\label{Eq:27}
	(\pounds_V S)(Y,\xi)=-Y(r)-2\eta(Y)\{r+2n(2n+1) \}.
	\end{equation}
	Substituting $Y=\xi$, we have $(\pounds_V S)(\xi,\xi)=-\xi(r)-2\{r+2n(2n+1) \}$. On the other hand, contracting \eqref{Eq:23} gives $(\pounds_V S)(\xi,\xi)=0.$ Using this in the previous equation leads to 
	\begin{equation}\label{Eq:28}
	\xi(r)=-2(r+2n(2n+1)).
	\end{equation}
	Hence \eqref{Eq:28} and \eqref{Eq:27} gives \eqref{Eq:016}.
\end{proof}
\begin{lemma}
	The scalar curvature $r$ of an $\eta$-Einstein Kenmotsu manifold $M$ of dimension $>3$ satisfies
	\begin{equation}\label{Eq:29}
	Dr=\xi(r)\xi.
	\end{equation}
\end{lemma}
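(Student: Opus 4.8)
The plan is to feed the $\eta$-Einstein condition into the contracted second Bianchi identity. First I would rewrite \eqref{E:010} in operator form as $QX=\alpha X+\beta\,\eta(X)\xi$, where $\alpha=\frac{r}{2n}+1$ and $\beta=-\big(\frac{r}{2n}+2n+1\big)$, so that $Z(\alpha)=\frac{1}{2n}Z(r)$ and $Z(\beta)=-\frac{1}{2n}Z(r)$ for every $Z$. Differentiating $Q$ covariantly and using $\nabla_Z\xi=Z-\eta(Z)\xi$ together with $(\nabla_Z\eta)(X)=g(Z,X)-\eta(Z)\eta(X)$, both consequences of \eqref{E:04}, one obtains
\begin{equation*}
(\nabla_Z Q)X=Z(\alpha)X+Z(\beta)\eta(X)\xi+\beta\big(g(Z,X)-\eta(Z)\eta(X)\big)\xi+\beta\,\eta(X)\big(Z-\eta(Z)\xi\big).
\end{equation*}

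Next I would take the divergence, i.e.\ contract this over $Z$ against a local orthonormal frame $\{e_i\}$. Using $\sum_i e_i(\phi)e_i=\text{grad }\phi$ for a function $\phi$, $\sum_i g(e_i,X)\eta(e_i)=\eta(X)$, $\sum_i\eta(e_i)^2=|\xi|^2=1$ and $\sum_i g(e_i,e_i)=2n+1$, the third term of the displayed formula contracts to $0$ and the fourth to $2n\beta\,\eta(X)$, so that $(\text{div }Q)(X)=X(\alpha)+\big(\xi(\beta)+2n\beta\big)\eta(X)$. Combining this with the contracted second Bianchi identity $\text{div }Q=\frac12\,\text{grad }r$ invoked earlier in this section and inserting the values of $\alpha,\beta$ gives, after collecting the $X(r)$ terms,
\begin{equation*}
\frac{n-1}{2n}\,X(r)=\Big(-\tfrac{1}{2n}\xi(r)-r-2n(2n+1)\Big)\eta(X)
\end{equation*}
for all $X\in\mathfrak{X}(M)$. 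Putting $X=\xi$ yields $\xi(r)=-2\big(r+2n(2n+1)\big)$ (cf.\ \eqref{Eq:28}), and feeding this back in shows the bracket on the right equals $\frac{n-1}{2n}\xi(r)$; hence $\frac{n-1}{2n}X(r)=\frac{n-1}{2n}\xi(r)\eta(X)$. Since $\dim M=2n+1>3$ forces $n\ge2$, the factor $\frac{n-1}{2n}$ is nonzero and may be cancelled, leaving $X(r)=\xi(r)\eta(X)=g\big(\xi(r)\xi,X\big)$, i.e.\ \eqref{Eq:29}.

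The computation is routine; the only points requiring care are the trace evaluations for the $\eta\otimes\xi$ and $\eta\otimes\eta$ parts of $\nabla Q$ — in particular using $\sum_i\eta(e_i)^2=1$ rather than $2n+1$, which is exactly what makes the third term above vanish — and the role of the dimension hypothesis, which is used precisely once, in the final cancellation of $n-1$. In dimension $3$ this factor vanishes, the argument degenerates to the single scalar relation obtained by taking $X=\xi$, and \eqref{Eq:29} can no longer be concluded in this way; this is why the lemma assumes $\dim M>3$.
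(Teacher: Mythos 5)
Your argument is correct, and it takes a genuinely different and more economical route than the paper. The paper first converts the $\eta$-Einstein condition into an expression for $R(X,Y)\xi$ of the form \eqref{E:4.02}, covariantly differentiates the full curvature tensor, contracts, invokes the second Bianchi identity in its uncontracted form, and finally kills the left-hand side using the Kenmotsu identities \eqref{Eq:11} and \eqref{Eq:12}; the surviving term carries the factor $\tfrac{2n-2}{4n(2n-1)}$, and $n>1$ is used to cancel it. You instead stay entirely at the level of the Ricci operator: you differentiate $Q=\alpha I+\beta\,\eta\otimes\xi$ directly using \eqref{E:04}, take the divergence (your trace evaluations, in particular $\sum_i\eta(e_i)^2=1$ making the $\eta\otimes\eta$ contribution vanish and the remaining term giving $2n\beta\,\eta(X)$, are correct), and feed the result into the once-contracted Bianchi identity $\operatorname{div}Q=\tfrac12\,\mathrm{grad}\,r$, which the paper also uses elsewhere in this section. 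This avoids the curvature-tensor manipulation altogether, and both proofs use the dimension hypothesis at the same essential point, namely the cancellation of the factor $n-1$. A pleasant by-product of your route is that the relation $\xi(r)=-2\bigl(r+2n(2n+1)\bigr)$ drops out of the $\eta$-Einstein condition alone, whereas in the paper the corresponding identity \eqref{Eq:28} is derived under the $*$-Ricci soliton hypothesis (it is in fact valid on any Kenmotsu manifold, as one sees by tracing \eqref{Eq:12}), so your argument makes transparent that the lemma is a purely Riemannian statement about $\eta$-Einstein Kenmotsu metrics with no soliton input.
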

\begin{proof}
	Since $M$ is $\eta$-Einstein, from \eqref{E:010} we have
	\begin{equation}\label{Eq:30}
	QX=\left( \frac{r}{2n}+1\right)X-\left( \frac{ r}{2n}+2n+1\right)\eta(X)\xi.
	\end{equation}
	Using \eqref{Eq:30}, \eqref{E:06} and \eqref{E:05}, one can easily verify that
	\begin{align}\label{E:4.02}
	R(X, Y)\xi=&\frac{1}{2n-1}\{S(Y, \xi)X+\eta(Y)QX-S(X,
	\xi)Y-\eta(X)QY\}\nonumber\\
	&-\frac{r}{2n(2n-1)}\{\eta(Y)X-\eta(X)Y\},
	\end{align}
	which also gives
	\begin{align}\label{E:4.03}
	R(X, \xi)Y=&\frac{1}{2n-1}\{g(Y, Q\xi)X+\eta(Y)QX-g(QX,Y)\xi-g(X,Y)Q\xi\}\nonumber\\
	&-\frac{r}{2n(2n-1)}\{\eta(Y)X-g(X,Y)\xi\}.
	\end{align}
	Putting $Y=\xi$ in \eqref{E:4.02} and then differentiating it along $W$ and using \eqref{E:4.03}, we get
	\begin{align*}
	(\nabla_WR)(X,\xi)\xi=&\frac{1}{2n-1}\{g((\nabla_WQ)\xi,\xi)X+(\nabla_WQ)X-g((\nabla_WQ)X,\xi)\xi\\
	&-\eta(X)(\nabla_WQ)\xi\}-\frac{Wr}{2n(2n-1)}\{ X-\eta(X)\xi\}.
	\end{align*}
	Taking inner product of above equation with $Y$ and contracting with respect to $X$ and $W$ yields
	\begin{align}\label{E:4.04}
	\sum_{i=1}^{2n+1} g((\nabla_{e_i}R)(e_i,\xi)\xi,Y)=&\frac{1}{2n-1}\{g((\nabla_YQ)\xi-(\nabla_\xi Q)Y,\xi)\}\nonumber\\
	&+\frac{2n-2}{4n(2n-1)}\{\varepsilon Yr-\eta(Y)\xi(r)\},
	\end{align}
	where $\{e_i\}$ is a local orthonormal basis on $M$. From second Bianchi identity we easily obtain
	\begin{equation}\label{E:4.05}
	\sum_{i=1}^{2n+1} g((\nabla_{e_i} R)(Y,\xi)\xi,e_i)=g((\nabla_Y Q)\xi-(\nabla_\xi Q)Y,\xi).
	\end{equation} 
	Then from \eqref{E:4.04} and \eqref{E:4.05} and noting that $n>1$ we get
	\begin{equation*}
	g((\nabla_Y Q)\xi-(\nabla_\xi Q)Y,\xi)=\frac{1}{4n}\{ Yr-\eta(Y)\xi(r)\}.
	\end{equation*}
	Since $\nabla Q$ is symmetric, the above equation becomes
	\begin{equation}\label{E:4.06}
	g((\nabla_Y Q)\xi,\xi)-g((\nabla_\xi Q)\xi,Y)=\frac{1}{4n}\{ Yr-\eta(Y)\xi(r)\}.
	\end{equation}
	On the other hand from \eqref{Eq:11} and \eqref{Eq:12}, the left hand side of above equation vanishes. Then \eqref{E:4.06} leads to $Yr=\eta(Y)\xi(r)$ which gives \eqref{Eq:29}.
\end{proof}
\begin{theorem}\label{T:3.2}
	Let $(M,\varphi, \xi, \eta, g)$ be an $\eta$-Einstein Kenmotsu manifold of dimension $>3$. If $g$ is a $*$-Ricci soliton, then $M$ is Einstein.
\end{theorem}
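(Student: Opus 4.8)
The plan is to combine the $\eta$-Einstein hypothesis $S=\alpha g+\beta\,\eta\otimes\eta$ with the identities \eqref{Eq:016} and \eqref{Eq:29} to obtain the pointwise relation $\beta\,\pounds_V\xi=0$ on $M$, and then to show that on the open set where $\beta\neq 0$ this is incompatible with the relation $(\pounds_V\nabla)(X,\xi)=2QX+4nX$ derived in the proof of Theorem~3.1.

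First I would evaluate $(\pounds_V S)(X,\xi)$ in two ways. On the one hand, \eqref{Eq:016} reads $(\pounds_V S)(X,\xi)=-X(r)+\xi(r)\eta(X)$, and since $\dim M>3$ the identity \eqref{Eq:29} applies, giving $X(r)=\xi(r)\eta(X)$ and hence $(\pounds_V S)(X,\xi)=0$. On the other hand, Lie-differentiating $S(X,\xi)=-2n\eta(X)$ along $V$ and cancelling the $\eta(\pounds_V X)$ terms leaves $(\pounds_V S)(X,\xi)=-2n(\pounds_V\eta)(X)-S(X,\pounds_V\xi)$; by Theorem~3.1, $\lambda=0$, so \eqref{Eq:25} gives $(\pounds_V g)(X,\xi)=0$ and thus $(\pounds_V\eta)(X)=g(X,\pounds_V\xi)$, while \eqref{Eq:26} gives $\eta(\pounds_V\xi)=0$, so the $\eta$-Einstein form of $S$ yields $S(X,\pounds_V\xi)=\alpha\,g(X,\pounds_V\xi)$. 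Substituting and using $\alpha+\beta=-2n$ (i.e.\ \eqref{E:4.2}) gives $(\pounds_V S)(X,\xi)=-(2n+\alpha)\,g(X,\pounds_V\xi)=\beta\,g(X,\pounds_V\xi)$. Comparing the two evaluations yields $\beta\,\pounds_V\xi=0$ on $M$.

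Suppose now, for contradiction, that $M$ is not Einstein; then the smooth function $\beta$ is not identically zero, so $U:=\{p\in M:\beta(p)\neq 0\}$ is a nonempty open set, and on $U$ we have $\pounds_V\xi=0$, hence also $(\pounds_V\eta)(X)=g(X,\pounds_V\xi)=0$. Feeding $\pounds_V\xi=0$ into the defining identity $(\pounds_V\nabla)(X,\xi)=\pounds_V(\nabla_X\xi)-\nabla_X(\pounds_V\xi)-\nabla_{\pounds_V X}\xi$ and using \eqref{E:04}, the middle term vanishes while the product rule together with $\pounds_V\xi=0$, $(\pounds_V\eta)(X)=0$ and \eqref{E:04} gives $\pounds_V(\nabla_X\xi)=\pounds_V X-\eta(\pounds_V X)\xi=\nabla_{\pounds_V X}\xi$; hence $(\pounds_V\nabla)(X,\xi)=0$ on $U$. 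But the proof of Theorem~3.1 shows $(\pounds_V\nabla)(X,\xi)=2QX+4nX$ for every Kenmotsu $*$-Ricci soliton, so $QX=-2nX$ on $U$, i.e.\ $S=-2n\,g$ there; with $\alpha+\beta=-2n$ this forces $\beta=0$ on $U$, contradicting the definition of $U$. Therefore $\beta\equiv 0$ and $M$ is Einstein.

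The proof is short once the two preceding lemmas are in hand; the one place needing care is the second evaluation of $(\pounds_V S)(X,\xi)$, where the Lie derivatives of $\alpha,\beta,\eta,g$ must be arranged so that the $V(\alpha)+V(\beta)=0$ term (coming from $\alpha+\beta=-2n$) drops out. The conceptually decisive move is to notice that on the locus $\{\beta\neq 0\}$ the condition $\pounds_V\xi=0$ forces $(\pounds_V\nabla)(X,\xi)=0$, which, set against the universally valid value $2QX+4nX$, pins the Ricci operator down to $-2n\,\mathrm{Id}$ and so contradicts $\beta\neq 0$.
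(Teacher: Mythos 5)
Your argument is correct, and its first half coincides with the paper's: you use \eqref{Eq:016} together with \eqref{Eq:29} to get $(\pounds_V S)(X,\xi)=0$, and Lie-differentiating \eqref{E:06} with $\lambda=0$, \eqref{Eq:25}, \eqref{Eq:26} and the $\eta$-Einstein form gives $\beta\,\pounds_V\xi=0$, which (since $\beta=-\tfrac{1}{2n}(r+2n(2n+1))$ by \eqref{E:010}) is exactly the paper's relation $(r+2n(2n+1))\pounds_V\xi=0$; incidentally, because you differentiate \eqref{E:06} rather than the $\eta$-Einstein identity, no $V(\alpha)+V(\beta)$ term ever appears, so the caveat in your last paragraph is moot. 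Where you genuinely depart from the paper is in closing the contradiction on the non-Einstein locus. The paper works on $\mathcal{O}=\{r\neq-2n(2n+1)\}$, converts $\pounds_V\xi=0$ into $\nabla_\xi V=V-\eta(V)\xi$ and $g(\nabla_XV,\xi)=-g(X,V)+\eta(X)\eta(V)$, and then feeds these into the Duggal--Sharma formula $(\pounds_V\nabla)(X,Y)=\nabla_X\nabla_YV-\nabla_{\nabla_XY}V+R(V,X)Y$ at $Y=\xi$ to conclude $\xi(r)=0$, contradicting \eqref{Eq:28}. You instead compute $(\pounds_V\nabla)(X,\xi)$ from the definition $\pounds_V(\nabla_X\xi)-\nabla_X(\pounds_V\xi)-\nabla_{\pounds_VX}\xi$, using \eqref{E:04}, $\pounds_V\xi=0$ and $(\pounds_V\eta)=0$, to get $(\pounds_V\nabla)(X,\xi)=0$ on $\{\beta\neq0\}$, and set this against the identity $(\pounds_V\nabla)(X,\xi)=2QX+4nX$ already established in the proof of Theorem~3.1 (valid for every Kenmotsu $*$-Ricci soliton), forcing $Q=-2n\,\mathrm{Id}$ and hence $\beta=0$ there. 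This is a clean shortcut: it bypasses \eqref{Eq:28} and the second-covariant-derivative computation of $V$ entirely, reuses an identity already in hand, and pins down the Ricci operator directly, whereas the paper's route only needs the scalar relation $\xi(r)=0$ and so involves slightly less tensorial bookkeeping at that final step. Both are valid; yours arguably makes the mechanism of the contradiction more transparent.
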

\begin{proof}
	Making use of \eqref{Eq:29} in \eqref{Eq:016}, we have $(\pounds_V S)(X,\xi)=0$.
	Now, Lie-differentiating \eqref{E:06} along $V$, using \eqref{E:010}, \eqref{Eq:25}, $\lambda=0$ and $\eta(\pounds_V \xi)=0$, we obtain
	\begin{equation*}
	(r+2n(2n+1))\pounds_V \xi=0.
	\end{equation*}
	Suppose if $r=-2n(2n+1)$, then \eqref{E:010} shows that $M$ is Einstein. 
	
	So that we assume $r\neq -2n(2n+1)$ in some open set $\mathcal{O}$ of $M$. Therefore on $\mathcal{O}$, we have $\pounds_V \xi=0$, and so it follows from \eqref{E:04} that 
	\begin{equation}\label{E:25}
	\nabla_\xi V=V-\eta(V)\xi.
	\end{equation}
	Clearly, \eqref{Eq:25} shows that $(\pounds_V g)(X,\xi)=0$ for any $X\in TM$. This together with \eqref{E:25}, it follows that
	\begin{equation}\label{E:26}
	g(\nabla_X V,\xi)=-g(\nabla_\xi V, X)=-g(X,V)+\eta(X)\eta(V).
	\end{equation}
	From Duggal and Sharma \cite{Duggal}, we know the following formula
	\begin{equation*}
	(\pounds_V \nabla)(X,Y)=\nabla_X\nabla_Y-\nabla_{\nabla_X Y} V+R(V,X)Y.
	\end{equation*}
	Setting $Y=\xi$ in the above equation and by virtue of \eqref{E:04}, \eqref{E:05}, \eqref{E:25} and \eqref{E:26}, we have $\xi r=0$, which with the help of \eqref{Eq:28} shows that  $r=-2n(2n+1)$. This leads to a contradiction and completes the proof.
\end{proof}

Now we consider Kenmotsu 3-manifolds which admits $*$-Ricci solitons.
\begin{theorem}\label{T:3.3}
	If the metric of a Kenmotsu 3-manifold is a $*$-Ricci soliton, then $M$ is of constant negative curvature $-1$.
\end{theorem}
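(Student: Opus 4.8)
The plan is to exploit that every three‑dimensional Kenmotsu manifold is automatically $\eta$‑Einstein, which makes all the preceding identities completely explicit. Since $M$ is $\eta$‑Einstein, \eqref{E:010} with $n=1$ gives $QX=\left(\tfrac r2+1\right)X-\left(\tfrac r2+3\right)\eta(X)\xi$, and inserting this into the expression \eqref{Eq:15} for the $*$‑Ricci tensor yields $S^{*}=\left(\tfrac r2+2\right)\big(g-\eta\otimes\eta\big)$. Feeding this and $\lambda=0$ (already established) into \eqref{Eq:03}, I get the reduced soliton identity $\pounds_V g=-(r+4)\big(g-\eta\otimes\eta\big)$, and I will keep at hand the scalar relation $\xi r=-2(r+6)$, which is \eqref{Eq:28} for $n=1$.

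First I would control the potential field along $\xi$. Lie‑differentiating \eqref{E:06}, using the $\eta$‑Einstein form of $S$ together with $(\pounds_V g)(X,\xi)=0$ from \eqref{Eq:25} and $\eta(\pounds_V\xi)=\lambda=0$ from \eqref{Eq:26}, gives $(\pounds_V S)(X,\xi)=-\left(\tfrac r2+3\right)g(X,\pounds_V\xi)$; comparing with \eqref{Eq:016} then yields
\[
Dr-(\xi r)\,\xi=\left(\tfrac r2+3\right)\pounds_V\xi .
\]
Thus on the open set where $r\neq-6$ one has $\pounds_V\xi=\tfrac{2}{r+6}\big(Dr-(\xi r)\xi\big)$, and it suffices to prove this set empty, i.e. that $\pounds_V\xi=0$ (equivalently $Dr\parallel\xi$). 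Once $\pounds_V\xi=0$, I can repeat the argument at the end of the proof of Theorem~\ref{T:3.2} verbatim: \eqref{E:04} gives $\nabla_\xi V=V-\eta(V)\xi$ and $g(\nabla_X V,\xi)=-g(X,V)+\eta(X)\eta(V)$, and inserting $Y=\xi$ into the Duggal--Sharma formula \cite{Duggal}, together with \eqref{E:04}, \eqref{E:05} and $(\pounds_V\nabla)(X,\xi)=2QX+4X=(r+6)(X-\eta(X)\xi)$, forces $r+6=0$, contradicting $r\neq-6$ on that set.

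So the heart of the matter is the claim $Dr\parallel\xi$. To get it I would run the differentiation scheme of the first theorem of this section: covariantly differentiate the reduced soliton identity, feed it into the commutation formula \eqref{E:13} to obtain a closed formula for $\pounds_V\nabla$ involving only $r$, $dr$, $\eta$, $\xi$, $g$, and then apply \eqref{E:026} to compute $\pounds_V R$. On the other hand, in dimension three the curvature of an $\eta$‑Einstein Kenmotsu manifold is determined by $r$ alone (through $R(X,Y)Z=\left(\tfrac r2+2\right)\{g(Y,Z)X-g(X,Z)Y\}$ corrected by the $\eta$‑terms, as in \eqref{E:4.02}), so $\pounds_V R$ can be computed a second way, now carrying $Vr$ and $\pounds_V\xi$. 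Equating the two and simplifying with $\xi r=-2(r+6)$ should force $Dr=(\xi r)\xi$ (and in fact pin down $\text{Hess}\,r$). Finally $r\equiv-6$ turns \eqref{E:010} into $S=-2g$, so $M$ is Einstein, and since every three‑dimensional Einstein manifold has constant sectional curvature, $M$ has constant curvature $\tfrac r6=-1$.

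The hard part will be exactly this curvature comparison. For $n>1$ the relation $Dr=(\xi r)\xi$ comes from \eqref{Eq:29}, but the proof of that lemma is vacuous when $n=1$ (its coefficient $\tfrac{2n-2}{4n(2n-1)}$ vanishes), so the three‑dimensional case must recover $Dr\parallel\xi$ from genuinely three‑dimensional curvature data. Moreover one has to use the \emph{full} tensor $\pounds_V R$ rather than its Ricci contraction: contracting $\pounds_V R$ merely reproduces the Lie derivative of the $\eta$‑Einstein $S$ that is already in hand, so no new information emerges. Pushing the non‑contracted comparison through the $\eta$‑Einstein curvature formula, while bookkeeping the horizontal, vertical and $Dr$ directions, is where nearly all the work lies; once $r=-6$, everything is immediate.
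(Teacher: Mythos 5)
Your scaffolding is fine: the explicit form \eqref{Eq:40} of $Q$ in dimension three, the resulting $S^{*}=\left(\tfrac{r}{2}+2\right)(g-\eta\otimes\eta)$, the identity $\left(\tfrac{r}{2}+3\right)\pounds_V\xi=Dr-\xi(r)\xi$ obtained from \eqref{Eq:016}, \eqref{Eq:25}, \eqref{Eq:26}, the correct diagnosis that the lemma giving \eqref{Eq:29} is vacuous for $n=1$, and the endgame (Duggal--Sharma at $Y=\xi$ against $(\pounds_V\nabla)(X,\xi)=(r+6)(X-\eta(X)\xi)$, then \eqref{Eq:28}, then Einstein plus dimension three gives constant curvature $-1$) are all correct. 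But the pivotal claim --- $Dr=\xi(r)\xi$, equivalently $\xi(r)=0$, equivalently $r\equiv-6$ --- is never proved. You only announce a strategy (``equating the two computations of $\pounds_V R$ \dots should force $Dr=\xi(r)\xi$'') and you yourself concede that ``nearly all the work lies'' there. Since every other step of your argument is conditional on exactly this statement, what you have is an outline whose decisive computation is missing, not a proof; this is a genuine gap.

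For comparison, the paper never needs the full tensor $\pounds_V R$: it substitutes \eqref{Eq:40} into \eqref{Eq:22} (already extracted from the soliton equation in the proof that $\lambda=0$), specializes to $Y=\xi$, compares with \eqref{Eq:23} to get $\xi(r)=0$, and then \eqref{Eq:28} gives $r=-6$ and \eqref{E:299} gives curvature $-1$. Be aware, though, that this comparison is more delicate than \eqref{E:32} suggests: carrying out the substitution of \eqref{Eq:40} into \eqref{Eq:22} produces an additional term $2(r+6)\{\eta(X)Y-\eta(Y)X\}$, and with it the specialization $Y=\xi$ collapses, via \eqref{Eq:28}, to an identity rather than to $\xi(r)=0$. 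This is exactly the phenomenon your own relation $\left(\tfrac{r}{2}+3\right)\pounds_V\xi=Dr-\xi(r)\xi$ predicts: nothing gives $\pounds_V\xi=0$ for free, and the restricted data $(\pounds_V R)(\cdot,\xi)\xi$ may carry no new information. So the step you defer with ``should force'' is precisely where the real difficulty sits, and your proposal cannot be accepted until that comparison is written out and genuinely yields $Dr\parallel\xi$ (or $r\equiv-6$) rather than another consequence of \eqref{Eq:28}.
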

\begin{proof}
	It is well known that the Riemannian curvature tenor of $3$-dimensional Riemannian manifold is given by
	\begin{align}\label{E:299}
	R(X,Y)Z=&g(Y,Z)QX-g(X,Z)QY+g(QY,Z)X-g(QX,Z)Y\nonumber\\
	&-\frac{r}{2}\{g(Y,Z)X-g(X,Z)Y \}.
	\end{align}
	Putting $Y=Z=\xi$ in \eqref{E:299} and using \eqref{E:05} and \eqref{E:06} gives 
	\begin{equation} \label{Eq:40}
	QX=\left( \frac{r}{2}+1\right)X-\left( \frac{r}{2}+3\right)\eta(X)\xi.
	\end{equation}
	Making use of above equation in \eqref{Eq:22} gives	\begin{equation}\label{E:32}
	(\pounds_V R)(X,Y)\xi=X(r)\{Y-\eta(Y)\xi\}
	+Y(r)\{-X+\eta(X)\xi \}.
	\end{equation}
	Replacing $Y$ by $\xi$ in the above equation and comparing it with \eqref{Eq:23}, we obtain
	\begin{equation*}
	\xi(r)\{-X+\eta(X)\xi \}=0.
	\end{equation*}
	Contracting the above equation with respect to $X$ gives $\xi(r)=0$, and consequently it follows from \eqref{Eq:28} that  $r=-6$. Then from \eqref{Eq:40} we have $QX=-2X$, and substituting this in \eqref{E:299} shows that $M$ is of constant negative curvature $-1$. 
\end{proof}
\begin{remark}
	In \cite{Ghosh}, Ghosh proved that, if $(M,g)$ is a Kenmotsu manifold of dimension $3$ and if $g$ is a Ricci soliton, then $M$ is of constant negative curvature $-1$, and here we have the same conclusion when $g$ is a $*$-Ricci soliton. Note that our approach and technique to obtain the result is  different to that of Ghosh.
\end{remark}
According to Kenmotsu \cite{Kenm}, the warped product space $\mathbb{R}\times_f K$, where $f(t)=ce^t$ on the real line $R$ and $K$ is K\"{a}hler manifold, admits a Kenmotsu structure. Consequently, we have the following result.
\begin{corol}
	Consider the warped product $M=\mathbb{R}\times_f K$, where $f(t)=ce^t$ and $K$ is a 2-dimensional K\"{a}hler manifold. If $M$ admits a $*$-Ricci soliton, then $M$ locally a hyperbolic space $\mathbb{H}^3(-1)$.
\end{corol}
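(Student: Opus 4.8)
The plan is to reduce the statement to Theorem~\ref{T:3.3} together with the classical classification of constant-curvature spaces. First I would record the dimension count: since $K$ is a $2$-dimensional K\"ahler manifold, the warped product $M=\mathbb{R}\times_f K$ is a $3$-manifold. By Kenmotsu's construction recalled just before the statement, the special choice $f(t)=ce^t$ of warping function makes $M$ carry a Kenmotsu structure $(\varphi,\xi,\eta,g)$; thus $M$ is a Kenmotsu $3$-manifold in the sense of Section~\ref{S:02}.

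Next, since by hypothesis $g$ is a $*$-Ricci soliton, Theorem~\ref{T:3.3} applies directly and yields that $M$ has constant negative sectional curvature $-1$; concretely, the argument of that theorem forces $\xi(r)=0$, hence $r=-6$ via \eqref{Eq:28}, then $QX=-2X$ via \eqref{Eq:40}, and finally substitution into \eqref{E:299} gives $R(X,Y)Z=-\{g(Y,Z)X-g(X,Z)Y\}$.

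Finally I would invoke the standard fact (the Killing--Hopf theorem on the uniqueness of space forms) that a connected Riemannian manifold of constant sectional curvature $-1$ is locally isometric to the hyperbolic space of the same dimension. In dimension $3$ this gives that $M$ is locally $\mathbb{H}^3(-1)$, which is the claim.

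There is essentially no genuine obstacle in this corollary: it is an immediate specialization of Theorem~\ref{T:3.3}. The only points that deserve a sentence of verification are the dimension count $\dim M=2\cdot 1+1=3$ and the fact that Kenmotsu's warped-product construction is compatible with the curvature sign convention $R(X,Y)=[\nabla_X,\nabla_Y]-\nabla_{[X,Y]}$ fixed earlier, both of which are routine.
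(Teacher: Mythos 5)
Your proposal is correct and follows the same route the paper intends: the warped product with $f(t)=ce^t$ carries a Kenmotsu structure by Kenmotsu's construction, Theorem~\ref{T:3.3} then gives constant sectional curvature $-1$, and the space-form classification yields that $M$ is locally $\mathbb{H}^3(-1)$. The paper states the corollary as an immediate consequence in exactly this way, so there is nothing to add.
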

We know that Kenmotsu manifold $M$ do not admit a Ricci soliton whose soliton vector field is equal to the characteristic vector field. Because, if $V=\xi$, then from \eqref{E:007} the Ricci soliton equation \eqref{E:00} would become
\begin{equation}\label{EQ:49}
S=-(1+\lambda)g+\eta\otimes\eta,
\end{equation}
which means $M$ is $\eta$-Einstein. But according to Ghosh (see Theorem~1 of \cite{Ghosh2} and Theorem~1 of \cite{Ghosh}) $M$ must Einstein, and this will be a contradiction to equation \eqref{EQ:49}. Thus in our study an interesting case arises when the soliton vector field $V$ of the *-Ricci soliton is equal to $\xi$, moreover, when $V$ is pointwise collinear with $\xi$. We consider this here and we prove the following.
\begin{theorem}
	If the metric of a Kenmotsu manifold is a *-Ricci soliton with a non-zero potential vector field $V$ pointwise collinear with the characteristic vector field $\xi$, then $M$ is Einstein and $V=\xi$.
\end{theorem}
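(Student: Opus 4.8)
The plan is to write $V=\rho\,\xi$ for a smooth function $\rho$ on $M$ and substitute this ansatz into the $*$-Ricci soliton equation, exploiting the structural identity \eqref{Eq:15} for $S^*$. First I would compute $\pounds_V g$ for $V=\rho\xi$ directly: using \eqref{E:04} one gets $\nabla_X(\rho\xi)=(X\rho)\xi+\rho(X-\eta(X)\xi)$, so that
\begin{equation*}
(\pounds_V g)(X,Y)=(X\rho)\eta(Y)+(Y\rho)\eta(X)+2\rho\{g(X,Y)-\eta(X)\eta(Y)\}.
\end{equation*}
Plugging this together with \eqref{Eq:15} into \eqref{Eq:03} and recalling from Theorem~3.1 that $\lambda=0$, I obtain an expression for $2S(X,Y)$ in terms of $g$, $\eta\otimes\eta$, and the symmetrized term $(X\rho)\eta(Y)+(Y\rho)\eta(X)$.

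Next I would extract information by specializing. Setting $X=Y=\xi$ and using $S(\xi,\xi)=-2n$ from \eqref{E:06} should pin down $\xi\rho$ (I expect $\xi\rho=-\rho$ or a similar first-order relation, reflecting the warped-product nature of Kenmotsu geometry). Then setting only $Y=\xi$ and using \eqref{E:06} again should force the "horizontal" part of $d\rho$ to vanish, i.e. $X\rho=(\xi\rho)\eta(X)$, so that $D\rho$ is collinear with $\xi$; combined with the previous relation this gives $D\rho=-\rho\,\xi$ (up to the sign that falls out of the computation). Substituting this back into the boxed expression for $S$ then yields $S=\alpha g+\beta\,\eta\otimes\eta$ with explicit $\alpha,\beta$ involving $\rho$, so $M$ is $\eta$-Einstein.

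Once $M$ is $\eta$-Einstein, I would invoke the rigidity available for Kenmotsu manifolds: the compatibility $\alpha+\beta=-2n$ from \eqref{E:4.2} together with the curvature identity \eqref{E:05} constrains $\beta$. Concretely, from $D\rho=-\rho\xi$ one computes $\nabla_X D\rho$, hence $\mathrm{Hess}\,\rho$, and imposes the symmetry / the known fact that on a Kenmotsu manifold an $\eta$-Einstein structure with non-constant data is incompatible unless $\beta=0$ — alternatively one differentiates the $\eta$-Einstein relation and uses \eqref{Eq:11}–\eqref{Eq:12} to show $\rho$ must be constant. If $\rho$ is constant, the horizontal vanishing $X\rho=(\xi\rho)\eta(X)$ forces $\xi\rho=0$ too, whence $\pounds_V g=2\rho(g-\eta\otimes\eta)$, and feeding this into the soliton equation gives $S=-2n\,g$, i.e. $M$ is Einstein; tracing or re-substituting then fixes $\rho=1$, so $V=\xi$.

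The main obstacle I anticipate is the step that upgrades "$M$ is $\eta$-Einstein with potential function $\rho$" to "$\rho$ is constant (equivalently $M$ is Einstein)": the first specializations are routine, but ruling out a genuinely non-constant $\rho$ requires combining the Hessian of $\rho$ coming from $D\rho=-\rho\xi$ with the second-order consequences of the $\eta$-Einstein condition (via \eqref{Eq:11}, \eqref{Eq:12}, and differentiation of \eqref{E:010}), and keeping the bookkeeping of the $\eta\otimes\eta$ coefficients correct. I would be careful that the argument does not secretly assume $\dim M>3$, since the statement is dimension-free; if a dimensional split is unavoidable, the $3$-dimensional case can be closed off separately using \eqref{Eq:40} and Theorem~\ref{T:3.3}.
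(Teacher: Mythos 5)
The first half of your plan is essentially the paper's own argument and it does work, but note what the computation actually yields: with $V=\rho\xi$, substituting your formula for $\pounds_{\rho\xi}g$ together with \eqref{Eq:15} and $\lambda=0$ into \eqref{Eq:03}, the specialization $X=Y=\xi$ (using \eqref{E:06}) gives $\xi\rho=0$, not $\xi\rho=-\rho$, and the specialization $Y=\xi$ then gives $X\rho+(\xi\rho)\eta(X)=0$, hence $D\rho=0$: the function $\rho$ is constant outright, exactly as in the paper. Consequently the machinery you build on the anticipated relation $D\rho=-\rho\xi$ (its Hessian, second-order consequences of \eqref{E:010}, etc.) rests on a miscomputed first-order relation and is not needed.

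The genuine gap is your final step. With $\rho$ constant, feeding the soliton equation does \emph{not} give $S=-2n\,g$; it gives $S=(1-\rho-2n)g+(\rho-1)\eta\otimes\eta$, i.e.\ precisely the paper's \eqref{Eq:44}, which is only $\eta$-Einstein, and neither tracing nor re-substituting in that identity by itself fixes $\rho=1$. Moreover the ``known fact'' you lean on --- that an $\eta$-Einstein Kenmotsu structure is incompatible unless $\beta=0$ --- is false: warped products $\mathbb{R}\times_{ce^t}K$ over non-Ricci-flat K\"ahler--Einstein manifolds $K$ are $\eta$-Einstein Kenmotsu with $\beta\neq 0$. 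What kills $\beta$ here is the $*$-Ricci soliton hypothesis: at this point the paper invokes its Theorem~\ref{T:3.2} ($\eta$-Einstein Kenmotsu of dimension $>3$ admitting a $*$-Ricci soliton is Einstein) together with Theorem~\ref{T:3.3} for dimension $3$, and only then does \eqref{Eq:44} force $\rho=1$ and $V=\xi$; so the dimensional split you hoped to avoid is exactly how the paper closes the argument. If you want a self-contained finish instead, trace \eqref{Eq:12} to obtain the general Kenmotsu identity $\xi(r)=-2(r+2n(2n+1))$ (cf.\ \eqref{Eq:28}); since constant $\rho$ makes $r$ constant via \eqref{Eq:44}, this forces $r=-2n(2n+1)$, whence $\rho=1$ and $S=-2n\,g$. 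As written, however, your proposal does not reach the stated conclusion.
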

\begin{proof}
	Observing the equation \eqref{E:10}, keeping in mind that $\lambda=0$ gives
	\begin{equation*}
	g(\nabla_X V,Y)+g(X,\nabla_Y V)=-2S(X,Y)-2(2n-1)g(X,Y)-2\eta(X)\eta(Y).
	\end{equation*}
	Putting $V=a\xi$, $a$ being a smooth function on $M$, in the above equation we have
	\begin{align*}
	da(X)\eta(Y)&+da(Y)\eta(X)+g(\nabla_X\xi, Y)+g(X,\nabla_Y \xi)\\
	&=-2S(X,Y)-2(2n-1+\lambda)g(X,Y)-2\eta(X)\eta(Y).
	\end{align*}
	Substituting $Y=\xi$ and using \eqref{E:04} and \eqref{E:06}, the above equation becomes 
	\begin{equation}\label{E:3.1}
	da(X)+da(\xi)\eta(X)=0.
	\end{equation}
	This means $a$ is invariant along the distribution $Ker \eta$, that is, $X(a)=0$ for all $X\in Ker \eta$.
	Replacing $X$ by $\xi$ in the above equation, we have
	\[
	da(\xi)=0.
	\]
	Thus $a$ is constant, and consequently we have
	\begin{equation*}
	a\pounds_\xi g=-2S-2(2n-1)g-2\eta\otimes\eta.
	\end{equation*}
	Making use of \eqref{E:007} in the above equation gives
	\begin{equation}\label{Eq:44}
	S=(1-a-2n)g+(a-1)\eta\otimes\eta,
	\end{equation}
	which means $M$ is $\eta$-Einstein. Then it follows from Theorem~\ref{T:3.2} and Theorem~\ref{T:3.3} that $M$ is Einstein. Hence \eqref{Eq:44} gives $a=1$ and the result follows.
\end{proof}
Now we provide an example of a Kenmotsu 3-manifold which admits a $*$-Ricci soliton and verify our results.
\begin{example}\label{Ex:01}
	Let $M=N \times I$, where $N$ is an open connected subset of $\mathbb{R}^2$ and $I$ is an open interval in $\mathbb{R}$.
	Let $(x,y,z)$ be the Cartesian coordinates in it. Define the structure $(\varphi, \xi, \eta, g)$ on $M$ as follows:
	\begin{align*}
	\varphi\left( \frac{\partial}{\partial x}\right)& = \frac{\partial}{\partial y},\quad \varphi\left( \frac{\partial}{\partial y}\right) =-\frac{\partial}{\partial x},\quad \varphi\left( \frac{\partial}{\partial z}\right) =0, \\
	\xi&= \frac{\partial}{\partial z},\quad \eta=dz,\\
	(g_{ij})&=
	\begin{pmatrix}
	e^{2z}\quad & 0\quad & 0\\
	0\quad & e^{2z}\quad & 0\\
	0\quad & 0\quad & 1
	\end{pmatrix}.
	\end{align*}
	Now from Koszul's formula, Levi-Civita connection $\nabla$ is given by
	\begin{equation}\label{E:43}
	\begin{aligned}
	\nabla_{\partial_x}\partial_x &=-e^{2z}\partial_z, &\nabla_{\partial_x}\partial_y&=0, &\nabla_{\partial_x}\partial_z&=\partial_x,\\
	\nabla_{\partial_y}\partial_x &=0, &\nabla_{\partial_y}\partial_y&=-e^{2z}\partial_z, &\nabla_{\partial_y}\partial_z&=\partial_y,\\
	\nabla_{\partial_z}\partial_x &=\partial_x, &\nabla_{\partial_z}\partial_y&=\partial_y, &\nabla_{\partial_z}\partial_z&=0.
	\end{aligned}
	\end{equation}
	where $\partial_x=\frac{\partial}{\partial x}, \partial_y=\frac{\partial}{\partial y}$ and $\partial_z=\frac{\partial}{\partial z}$. From \eqref{E:43}, one can easily verify \eqref{EQ:06}, and so $(M, \varphi, \xi, \eta, g)$ is a Kenmotsu manifold.
	
	With the help of \eqref{E:43}, we find the following: 
	\begin{equation}\label{E:44}
	\begin{gathered}
	R(\partial_x, \partial_y)\partial_z=R(\partial_y, \partial_z)\partial_x =R(\partial_x, \partial_z)\partial_y =0,\\
	R(\partial_x, \partial_z)\partial_x=R(\partial_y, \partial_z)\partial_y=e^{2z}\partial_z,\\
	R(\partial_x, \partial_y)\partial_x=e^{2z}\partial_y, \quad R(\partial_y, \partial_z)\partial_z=-\partial_y,\\
	R(\partial_x, \partial_z)\partial_z=-\partial_x, \quad R(\partial_x, \partial_y)\partial_y=-e^{2z}\partial_x.
	\end{gathered}
	\end{equation}
	Let $e_1=e^{-z}\partial_x, e_2=e^{-z}\partial_y$ and $e_3=\xi=\partial_z$. Clearly, $\{e_1, e_2,e_3\}$ forms an orthonormal $\varphi$-basis of vector fields on $M$. Making use of \eqref{E:44} one can easily show that $M$ is Einstein, that is, $S(X,Y)=-2ng(X,Y)$, for any $X,Y\in \mathfrak{X}(M)$. Also from the definition of *-Ricci tensor it is not hard to see that 
	\begin{equation}\label{Eq:59}
	S^*(X,Y)=-g(X,Y)+\eta(X)\eta(Y),
	\end{equation}
	for any  $X,Y\in \mathfrak{X}(M)$.
	
	Let us consider the vector field
	\begin{equation}\label{E:45}
	V=(1-a)x\partial x+(1-a)y\partial y+a\partial z,
	\end{equation}
	where $a\neq 1$ is a constant. Using \eqref{E:43} one can easily verify that 
	\begin{equation}\label{Eq:61}
	(\pounds_V g)(X,Y)=2\{g(X,Y)-\eta(X)\eta(Y) \},
	\end{equation}
	for any  $X,Y\in \mathfrak{X}(M)$. Combining \eqref{Eq:61} and \eqref{Eq:59}, we obtain that $g$ is a $*$-Ricci soliton, that is, \eqref{Eq:03} holds true with $V$ as in \eqref{E:45} and $\lambda=0$. Further \eqref{E:44} shows that $R(X,Y)Z=-\{g(Y,Z)X-g(X,Z)Y\}$ for any  $X,Y, Z\in \mathfrak{X}(M)$, which means $M$ is of constant negative curvature $-1$ and this verifies the Theorem~\ref{T:3.3}.
\end{example}

\section{Gradient almost $*$-Ricci solitons on Kenmotsu manifolds}
First, we prove the following result.
\begin{lemma}
	If the metric of a Kenmotsu manifold is a gradient almost $*$-Ricci soliton, then the Riemannian curvature tensor $R$ can be expressed as
	\begin{align}\label{Eq:42}
	R(X,Y)Df=(\nabla_Y Q)X-(\nabla_XQ)Y+Y(\lambda)X-X(\lambda)Y+\eta(X)Y-\eta(Y)X,
	\end{align}
	for any $X,Y\in \mathfrak{X}(M)$.
\end{lemma}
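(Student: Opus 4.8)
The plan is to start from the gradient almost $*$-Ricci soliton equation \eqref{Eq:04} and convert the Hessian term into the covariant derivative of the gradient. Using the expression \eqref{Eq:15} for the $*$-Ricci tensor on a Kenmotsu manifold, equation \eqref{Eq:04} becomes, for all $X$,
\[
\nabla_X Df = -QX -(2n-1)X - \eta(X)\xi - \lambda X,
\]
where I write $Df$ for the gradient of $f$ and use $S^*(X,Y)=g(S^*X,Y)$ with $S^*X = QX+(2n-1)X+\eta(X)\xi$. This is the key first-order identity: it expresses $\nabla_X Df$ explicitly in terms of $Q$, the structure tensors, and $\lambda$.

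Next I would differentiate this identity covariantly along an arbitrary $Y$, obtaining an expression for $\nabla_Y \nabla_X Df$; the only nontrivial pieces are $(\nabla_Y Q)X$, the derivative of the $\eta(X)\xi$ term (handled via \eqref{E:04}, which gives $\nabla_Y\xi = Y-\eta(Y)\xi$ and $(\nabla_Y\eta)(X)=g(X,Y)-\eta(X)\eta(Y)$), and the derivative $Y(\lambda)X$ of the variable soliton function. Then I plug into the definition of the curvature tensor, $R(X,Y)Df = \nabla_X\nabla_Y Df - \nabla_Y\nabla_X Df - \nabla_{[X,Y]}Df$. When I antisymmetrize in $X$ and $Y$, the symmetric $\nabla_{\nabla_X Y}$-type terms and the Hessian-of-$\lambda$ symmetric part cancel, leaving precisely $(\nabla_Y Q)X - (\nabla_X Q)Y$ from the Ricci-operator terms, $Y(\lambda)X - X(\lambda)Y$ from the $\lambda$ terms, and the contribution of the $\eta(X)\xi$ term. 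For that last one, $\nabla_Y(\eta(X)\xi)$ produces terms; after antisymmetrization the piece $(\nabla_Y\eta)(X)\xi - (\nabla_X\eta)(Y)\xi = 0$ (the expression $g(X,Y)-\eta(X)\eta(Y)$ is symmetric), while $\eta(X)\nabla_Y\xi - \eta(Y)\nabla_X\xi = \eta(X)(Y-\eta(Y)\xi) - \eta(Y)(X-\eta(X)\xi) = \eta(X)Y - \eta(Y)X$, which is exactly the last two terms of \eqref{Eq:42}.

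I expect the bookkeeping of the $\eta(X)\xi$ term to be the only real obstacle — one has to be careful to split it as $\eta(X)\xi$ and differentiate by the product rule, tracking both the derivative of $\eta(X)$ and of $\xi$, and then verify the symmetric parts drop out under antisymmetrization. Everything else is routine: the $QX$ and $(2n-1)X$ terms are handled by the standard curvature-of-gradient computation, and the $\lambda X$ term just contributes the Hessian of $\lambda$, whose symmetric part vanishes upon antisymmetrization. Collecting all surviving terms yields \eqref{Eq:42}.
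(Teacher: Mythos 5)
Your proposal is correct and follows essentially the same route as the paper: substitute \eqref{Eq:15} into \eqref{Eq:04} to get $\nabla_X Df=-QX-(\lambda+2n-1)X-\eta(X)\xi$, differentiate covariantly, and antisymmetrize in the curvature definition, with the $\eta(X)\xi$ bookkeeping handled exactly as you describe. Nothing is missing.
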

\begin{proof}
	Using the expression of $*$-Ricci tensor as given in \eqref{Eq:15} into the definition of gradient almost *-Ricci soliton, we get
	\begin{equation*}
	\nabla_X Df=-QX-(\lambda+2n-1)X-\eta(X)\xi.
	\end{equation*}
	Differentiating the above equation along an arbitrary vector field $Y$, we have
	\begin{align*}
	\nabla_Y\nabla_X Df=&-(\nabla_Y Q)X-Q(\nabla_Y X)-Y(\lambda)X-(\lambda+2n-1)\nabla_Y X\\
	&-g(X,Y)\xi+\eta(X)\eta(Y)\xi-\eta(\nabla_Y X)\xi-\eta(X)Y+\eta(X)\eta(Y)\xi.
	\end{align*}
	Then applying the above equation in the expression of Riemannian curvature tensor, we obtain the desired result.
\end{proof}
Now we prove the following fruitful result.
\begin{theorem}\label{T:4.1}
	If the metric of a Kenmotsu manifold $M$ is a gradient almost $*$-Ricci soliton, then either $M$ is Einstein or the soliton vector field $V$ is pointwise collinear with the characteristic vector field $\xi$ on an open set $\mathcal{O}$ of $M$.
\end{theorem}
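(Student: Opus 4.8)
The plan is to rewrite the soliton equation as a pointwise linear expression for the Ricci operator $Q$, read off from this that $M$ is $\eta$-Einstein, and then play two separate collinearity statements against each other to force either $\beta\equiv 0$ (Einstein) or $Df\parallel\xi$ on an open set.

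Substituting \eqref{Eq:15} into \eqref{Eq:04} gives $\nabla_X Df=-QX-(\lambda+2n-1)X-\eta(X)\xi$, which is precisely the identity the preceding lemma uses to produce \eqref{Eq:42}. Now set $Y=\xi$ in \eqref{Eq:42}: using \eqref{Eq:11}, \eqref{Eq:12}, and the fact that \eqref{E:05} yields $R(X,\xi)Df=g(X,Df)\xi-\eta(Df)X=(Xf)\xi-(\xi f)X$, and solving for $QX$, one obtains $QX=(\xi(h)-2n-1)X+(\eta(X)-X(h))\xi$ with $h:=f+\lambda$. Pairing this with $\xi$ and invoking \eqref{E:06} forces $Dh=\xi(h)\,\xi$; feeding this back in shows $S=\alpha g+\beta\,\eta\otimes\eta$ with $\alpha=\xi(h)-2n-1$ and $\beta=1-\xi(h)$, so $M$ is $\eta$-Einstein. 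If $\beta\equiv 0$ then $M$ is Einstein and we are done, so assume otherwise and set $\mathcal{O}:=\{p\in M:\beta(p)\neq 0\}$, a nonempty open set.

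Two facts valid on all of $M$ now finish the argument. First, writing $Dh=\phi\,\xi$ with $\phi:=\xi(h)$ and using \eqref{E:04} gives $\nabla_X Dh=\phi X+(X(\phi)-\phi\,\eta(X))\xi$; imposing the symmetry of the Hessian of $h$ and specializing to $Y=\xi$ yields $D\phi=\xi(\phi)\,\xi$, hence $D\alpha$ is everywhere collinear with $\xi$. Second, contract \eqref{Eq:42} over $X$: the left side becomes $S(Y,Df)=\alpha\,Y(f)+\beta\,\eta(Y)\,\xi(f)$, while on the right side the relations $\mathrm{div}\,Q=\tfrac12\,\mathrm{grad}\,r$, $r=2n(\alpha-1)$, $\alpha+\beta=-2n$, and $Y(\lambda)=Y(h)-Y(f)=\phi\,\eta(Y)-Y(f)$ let everything collapse to the statement that $nD\alpha+\beta\,Df$ is collinear with $\xi$. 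Combining the two facts gives $\beta\,Df\parallel\xi$ on $M$; on $\mathcal{O}$, where $\beta$ is nowhere zero, this says exactly that $V=Df$ is pointwise collinear with $\xi$.

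I expect the main obstacle to be the bookkeeping in the second fact: one must use the correct trace conventions (for $\mathrm{Ric}$ and for $\mathrm{div}\,Q$) and substitute the $\eta$-Einstein expressions for $r$ and for $\beta$ at just the right stage so that all the non-$\xi$ terms cancel. An equivalent route, if that computation gets unwieldy, is to compute $\mathrm{div}(\mathrm{Hess}\,f)$ from $\mathrm{div}(\mathrm{Hess}\,f)=d(\Delta f)+\mathrm{Ric}(Df,\cdot)$, which produces the companion relation $(2n-1)D\alpha+\beta\,Df\parallel\xi$ and the same conclusion. The conceptual heart — and the reason no dimension hypothesis is required — is the first fact: the soliton identity forces $Dh\parallel\xi$, and on a Kenmotsu manifold that automatically propagates to $D\alpha\parallel\xi$.
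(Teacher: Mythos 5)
Your proposal is correct and follows essentially the same route as the paper: both start from the curvature identity \eqref{Eq:42}, plug in $\xi$ to extract the $\eta$-Einstein form of $S$ together with $d(f+\lambda)=\xi(f+\lambda)\eta$, and then contract \eqref{Eq:42} to conclude that $(r+2n(2n+1))\,Df$ is collinear with $\xi$, which is exactly your relation $\beta\,Df\parallel\xi$ since $r+2n(2n+1)=-2n\beta$. Your only departures are cosmetic: you obtain $Dr\parallel\xi$ from the symmetry of $\operatorname{Hess}(f+\lambda)$ instead of applying $d$ to \eqref{Eq:45} and using $d\eta=0$, and you phrase the contracted identity in terms of $\alpha,\beta$ rather than $r$.
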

\begin{proof}
	Taking inner product of \eqref{Eq:42} with $\xi$, with the help of \eqref{Eq:11}, we get
	\begin{equation*}
	g(R(X,Y)Df,\xi)=Y(\lambda)\eta(X)-X(\lambda)\eta(Y).
	\end{equation*}
	Now taking inner product of \eqref{E:05} with $Df$ gives
	\begin{equation*}
	g(R(X,Y)\xi,Df)=Y(f)\eta(X)-X(f)\eta(Y).
	\end{equation*}
	Comparing the last two equations and then plugging $Y=\xi$ yields $X(f+\lambda)=\xi(f+\lambda)\eta(X)$, from which we have
	\begin{equation}\label{Eq:45}
	d(f+\lambda)=\xi(f+\lambda)\eta,
	\end{equation}
	which means $f+\lambda$ is invariant along the distribution $Ker \eta$, that is, $X(f+\lambda)=0$ for all $X\in Ker \eta$.
	
	Now putting $X=\xi$ in \eqref{Eq:42} and then taking inner product with $Z$ gives
	\begin{equation*}
	g(R(\xi, Y)Df, Z)=S(X,Z)+(2n+1-\xi(\lambda)) g(X,Z)+Z(\lambda)\eta(X)-\eta(X)\eta(Z).
	\end{equation*}
	On the other hand, from \eqref{E:05} we have
	\begin{equation*}
	g(R(\xi,Z)X,Df)=-g(X,Z)\xi(f)+\eta(X)Z(f).
	\end{equation*}
	Combining the above two equations, one can easily obtain
	\begin{equation}\label{Eq:48}
	S(X,Z)=(\xi(f+\lambda)-2n-1)g(X,Z)+(1-\xi(f+\lambda))\eta(X)\eta(Z),
	\end{equation}
	which means $M$ is $\eta$-Einstein. Contracting the above equation, one immediately obtain
	\begin{equation}\label{Eq:49}
	\xi(f+\lambda)=\frac{r}{2n}+(2n+2).
	\end{equation}
	Using the above equation in \eqref{Eq:48} one can easily have \eqref{E:010}.  Now contracting \eqref{Eq:42} over $X$, we obtain
	\begin{equation*}
	S(Y,Df)=\frac{1}{2}Y(r)+2n Y(\lambda)-2n\eta(Y).
	\end{equation*}
	Comparing the above equation with \eqref{E:010} shows that
	\begin{equation}\label{Eq:51}
	(r+2n)Y(f)-(r+2n(2n+1))\eta(Y)\xi(f)-nY(r)-4n^2 Y(\lambda)+4n^2 \eta(Y)=0,
	\end{equation}
	for all $Y\in \mathfrak{X}(M)$. Substituting $Y=\xi$ in the above equation, it follows from \eqref{Eq:49} that 
	\begin{equation}\label{EQ:61}
	\xi(r)=-2(r+2n(2n+1)).
	\end{equation}
	Operating \eqref{Eq:45} by $d$, and since $d^2=0$ and $d\eta=0$, we obtain $dr\wedge\eta=0$, that is,
	\begin{equation*}
	dr(X)\eta(Y)-dr(Y)\eta(X)=0,
	\end{equation*} 
	for all $X,Y\in \mathfrak{X}(M)$. After replacing $Y$ by $\xi$ in the above equation and using \eqref{EQ:61}, we have $X(r)=-2(r+2n(2n+1))\eta(X)$, which means
	\begin{equation}\label{Eq:54}
	Dr=-2(r+2n(2n+1))\xi.
	\end{equation}
	Let $X\in Ker \eta$ be arbitrary. Then with the help of \eqref{Eq:54}, the equation \eqref{Eq:51} becomes
	\begin{equation*}
	(r+2n)X(f)-4n^2X(\lambda)=0.
	\end{equation*}
	Using \eqref{Eq:45} and \eqref{Eq:49} in the above equation, we have $(r+2n(2n+1))X(f)=0$. This implies that 
	\begin{equation*}
	(r+2n(2n+1))(Df-\xi(f))=0.
	\end{equation*}
	If $r=-2n(2n+1)$, then it follows from the $\eta$-Einstein condition \eqref{E:010} that $S=-2ng$, and hence $M$ is Einstein. Suppose if $r\neq-2n(2n+1)$ on some open set $\mathcal{O}$ of $M$, then we have $Df=\xi(f)$, and this completes the proof.
\end{proof}
\begin{corol}\label{C:4.1}
	If the metric of a Kenmotsu 3-manifold is a gradient almost $*$-Ricci soliton, then either $M$ is of constant curvature $-1$ or the soliton vector field $V$ is pointwise collinear with the characteristic vector field $\xi$ on an open set $\mathcal{O}$ of $M$.
\end{corol}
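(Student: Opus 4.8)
The plan is to read off the statement as the three-dimensional instance of Theorem~\ref{T:4.1}, so almost all the work is already done. Setting $2n+1=3$, i.e. $n=1$, Theorem~\ref{T:4.1} immediately gives the dichotomy: either $M$ is Einstein, or the soliton vector field $V$ is pointwise collinear with $\xi$ on an open set $\mathcal{O}\subset M$. The second alternative is verbatim the second alternative in the corollary, so nothing further need be said about it. It therefore remains only to upgrade the conclusion ``$M$ is Einstein'' to ``$M$ has constant curvature $-1$'' when the dimension is three.

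For this I would invoke the standard fact, already recorded above as \eqref{E:299}, that on a $3$-dimensional Riemannian manifold the full curvature tensor is algebraically determined by the Ricci operator $Q$ and the scalar curvature $r$. If $M$ is Einstein with $n=1$, then by \eqref{E:06} (equivalently \eqref{E:010} with $\beta=0$) one has $S=-2g$, hence $QX=-2X$ for all $X$ and, tracing, $r=-6$. Substituting $QX=-2X$ and $r=-6$ into \eqref{E:299} and collecting terms yields
\begin{equation*}
R(X,Y)Z=-\bigl\{g(Y,Z)X-g(X,Z)Y\bigr\}
\end{equation*}
for all $X,Y,Z\in\mathfrak{X}(M)$, which is exactly the curvature tensor of a space of constant sectional curvature $-1$. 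Combining this with the dichotomy above completes the proof.

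There is essentially no obstacle here: the argument is a direct specialization of Theorem~\ref{T:4.1} followed by a one-line algebraic simplification in \eqref{E:299}. The only point requiring (minor) care is confirming that the Einstein constant must equal $-2n=-2$ — which is forced by \eqref{E:06}, since $\xi$ is a unit vector with $S(\xi,\xi)=-2n$ — and then checking the bookkeeping of the coefficients in \eqref{E:299}; both are routine.
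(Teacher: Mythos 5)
Your proposal is correct and follows essentially the same route as the paper: specialize Theorem~\ref{T:4.1} to $n=1$, and in the Einstein alternative use \eqref{E:06} to get $QX=-2X$ (so $r=-6$) and substitute into \eqref{E:299} to obtain $R(X,Y)Z=-\{g(Y,Z)X-g(X,Z)Y\}$, i.e.\ constant curvature $-1$. Nothing further is needed.
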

\begin{proof}
	From Theorem~\ref{T:4.1} we have $M$ is Einstein, that is, $QX=-2nX$. Using this in \eqref{E:299} shows that $M$ is of constant negative curvature $-1$. 
\end{proof}

Now we provide an example of a gradient almost $*$-Ricci soliton on a Kenmotsu 3-manifold. 
\begin{example}
	Let $M=N \times I$, where $N$ is an open connected subset of $\mathbb{R}^2$ and $I$ is an open interval in $\mathbb{R}$.
	Let $(x,y,z)$ be the Cartesian coordinates in it. Define the Kenmotsu structure $(\varphi, \xi, \eta, g)$ on $M$ as in Example~\ref{Ex:01}.
	
	Let $f:M\to \mathbb{R}$ be a smooth function defined by 
	\begin{equation}\label{EQ:63}
	f(x,y,z)=-xe^z+z.
	\end{equation}
	Then the gradient of $f$ with respect to the metric $g$ is given by 
	\begin{equation*}
	Df=-e^{-z}\partial_x+(1-xe^z)\partial_z.
	\end{equation*}
	With the help of \eqref{E:43} one can easily verify that
	\begin{equation*}
	(\pounds_{Df} g)(X,Y)=2\{(1-xe^z)g(X,Y)-\eta(X)\eta(Y) \}.
	\end{equation*}
	Using \eqref{Eq:59} in the above equation, we have
	\begin{equation*}
	(\pounds_{Df} g)(X,Y)+2S^*(X,Y)+2xe^z g(X,Y)=0,
	\end{equation*}
	for all $X,Y\in \mathfrak{X}(M)$. Thus $g$ is a gradient almost $*$-Ricci soliton with the soliton vector field $V=Df$, $f$ as given in \eqref{EQ:63} and $\lambda=xe^z$. As shown in Example~\ref{Ex:01}, $M$ is Einstein and is of constant negative curvature $-1$. This verifies Theorem~\ref{T:4.1} and Corollary~\ref{C:4.1} 
\end{example}


\begin{thebibliography}{9}
		
\bibitem{Blair} Blair, D.E.:
\textit{Riemannian Geometry of Contact and Symplectic Manifolds. Progress in
Mathematics}, \textbf{203}, Birkh\"{a}user, Boston, 2010.



\bibitem{Chow} Chow B, Knopf D.: \textit{The Ricci flow: an introduction, mathematical surveys and monographs}, \textbf{110}, American Mathematical Society, 2004.

\bibitem{Duggal} Duggal, K.L., Sharma, R.: \textit{Symmetries of Spacetimes and Riemannian Manifolds}. Kluwer, Dordrecht, 1999.

\bibitem{Ghosh}  Ghosh, A.: \textit{Kenmotsu 3-metric as a Ricci soliton}. Chaos Solitons Fractals, \textbf{44}, 647-650 (2011)

\bibitem{Ghosh2}  Ghosh, A.: \textit{An $\eta$-Einstein Kenmotsu metric as a Ricci soliton}. Publ. Math. Debrecen, \textbf{82} (3-4), 591-598  (2013)

\bibitem{Ghosh3}  Ghosh, A., Patra, D.S.: \textit{$*$-Ricci Soliton within the frame-work of Sasakian and $(\kappa, \mu)$-contact manifold}. Int. J. Geom. Methods Mod. Phys. \textbf{15} (7) 1850120 (2018)

\bibitem{Hamada} Hamada, T.: \textit{Real hypersurfaces of complex space forms in terms of Ricci $*$-tensor}. Tokyo J. Math. \textbf{25},  473-483 (2002)

\bibitem{Ham} Hamilton, R.S.:
\textit{The Ricci flow on surfaces}. Contemp. Math. \textbf{71},
237-261 (1988)

\bibitem{Kaimakamis} Kaimakamis, G., Panagiotidou, K.:  \textit{$*$-Ricci solitons of real hypersurface in non-flat complex space forms}. J. Geom. Phy. \textbf{76}, 408-413 (2014)

\bibitem{Kenm} Kenmotsu, K.:
\textit{A class of almost contact Riemannian manifolds}. Tohoku Math. J. \textbf{24}, 93-103 (1972)

\bibitem{Prakasha} Prakasha, D.G., Veeresha, P.: \textit{Para-Sasakian manifolds and $*$-Ricci solitons}. arXiv:1801.01727v1

\bibitem{Sharma} Sharma, R.: \textit{Certain results on $K$-contact and $(\kappa, \mu)$-contact manifolds}. J. Geom. \textbf{89}, 138-147 (2008)

\bibitem{Tachibana} Tachibana, S.:  \textit{On almost-analytic vectors in almost Kahlerian manifolds}. Tohoku Math. J. \textbf{11}, 247-265 (1959)	

\bibitem{Yano} Yano K.: \textit{Integral formulas in Riemannian geometry}. New York, Marcel Dekker, 1970.
	\end{thebibliography}
\end{document}